\def\bC{{\rm \bf C}}
\def\bN{{\rm \bf N}}
\def\bR{{\rm \bf R}}
\def\bZ{{\rm \bf Z}}
\def\ch{{\rm  ch}}
\def\td{{\rm  Td}}
\def\rk{{\rm  rk}}
\def\End{{\rm  End}}
\def\ind{{\rm  Ind}}
\newcommand{\Ker}{\rm Ker}
\newcommand{\tr}{{\rm Tr}}
\newcommand{\rL}{{\rm L}}
\newcommand{\var}{\varepsilon}
\newcommand{\wi}{ \widetilde}
\newcommand{\comment}[1]{}
\newtheorem{thm}{Theorem}[section]
\newtheorem{prop}[thm]{Proposition}
 \newtheorem{cor}[thm]{Corollary}
\newtheorem{lemma}[thm]{Lemma}
\theoremstyle{remark}
\newtheorem{Rem}[thm]{Remark}
\theoremstyle{definition}
\newtheorem{defn}[thm]{Definition}
\title{Adiabatic limit, Bismut-Freed connection, and \\ the real analytic  torsion form}
\author{Xianzhe Dai$\ $  and$\ $  Weiping Zhang}
\date{}
\begin{document}

\maketitle
\comment{
\documentclass[11pt,reqno]{article}
\usepackage{mathrsfs}
\usepackage{amscd}
\usepackage{amssymb}
\usepackage{amsfonts}
\usepackage{euscript,amstext,amsthm,amssymb}
\usepackage{amsmath}

\textwidth 6in
\textheight 8.5in
\topmargin -0.25in
\oddsidemargin 0.25in

\newcommand{\be}{\begin{equation}}
      \newcommand{\ee}{\end{equation}}
      \newcommand{\ba}{\begin{eqnarray}}
       \newcommand{\ea}{\end{eqnarray}}
\newcommand{\ban}{\begin{eqnarray*}}
       \newcommand{\ean}{\end{eqnarray*}}

\newcommand{\field}[1]{\mathbb{#1}}
\newcommand{\bZ}{\field{Z}}
\newcommand{\bR}{\field{R}}
\newcommand{\bC}{\field{C}}
\newcommand{\bN}{\field{N}}
\def\cC{\mathscr {C}}
\def\cL{\mathscr {L}}
\def\cO{\mathscr {O}}
\def\cR{\mathscr {R}}
\def\mA{\mathcal{A}}
\def\mB{\mathcal{B}}
\def\mC{\mathcal{C}}
\def\mL{\mathcal{L}}
\def\mO{\mathcal{O}}
\def\mQ{\mathcal{Q}}
\def\mR{\mathcal{R}}
\def\bk{{\bf k}}
\def\bm{{\bf m}}
\def\br{{\bf r}}
\def\Re{{\rm Re}}
\def\Im{{\rm Im}}

\newcommand{\til}[1]{\widetilde{#1}}
\newcommand{\tx}{\til{X}}
\newcommand{\tj}{\til{J}}
\newcommand{\tg}{\til{g}}
\newcommand{\tsx}{\til{x}}
\newcommand{\ty}{\til{y}}
\newcommand{\tom}{\til{\omega}}
\newcommand{\tl}{\til{L}}
\newcommand{\te}{\til{E}}
\newcommand{\tdk}{\til{D}_k}
\newcommand{\tdel}{\til{\Delta}_k}
\newcommand{\tp}{\til{P}}
\newcommand{\tc}{\til{c}}
\newcommand{\tta}{\til{\tau}}
\newcommand{\tsc}{\til{\Delta}^\#_k}
\newcommand{\cali}[1]{\mathcal{#1}}
\newcommand{\Ha}{\cali{H}}

\newcommand{\etD}{e^{-tD^2}}
\newcommand{\etDu}{ e^{-tD_u^2}}
\newcommand{\bM}{\partial M}
\newcommand{\Db}{D_{\partial}}
\newcommand{\Dd}{\bar{D}}
\newcommand{\reta}{\overline{\eta}}
\newcommand{\sgn}{\mbox{\rm sgn}}
\newcommand{\TrbM}{\mbox{\rm Tr}^{\bM}}
\newcommand{\Tr}{\mbox{\rm Tr}}
\newcommand{\DET}{\mbox{\rm DET}}
\newcommand{\erfc}{\mbox{\rm erfc}}
\newcommand{\Dg}{g^{-1}[D, g]}
\newcommand{\Dbg}{ g^{-1}[\Db, g]}

\DeclareMathOperator{\End}{End} \DeclareMathOperator{\Hom}{Hom}
\DeclareMathOperator{\Ker}{Ker} \DeclareMathOperator{\Dom}{Dom}
\DeclareMathOperator{\ran}{Range} \DeclareMathOperator{\rank}{rank}
\DeclareMathOperator{\Id}{Id} \DeclareMathOperator{\supp}{supp}
\DeclareMathOperator{\tr}{Tr} \DeclareMathOperator{\ind}{index}
\DeclareMathOperator{\td}{Td} \DeclareMathOperator{\vol}{vol}
\DeclareMathOperator{\ch}{ch}
\newcommand{\spin}{$\text{spin}^c$ }
\newcommand{\spec}{\rm Spec}

\newcommand{\norm}[1]{\lVert#1\rVert}
\newcommand{\abs}[1]{\lvert#1\rvert}
\newcommand{\om}{\omega}

\newtheorem{thm}{Theorem}[section]
\newtheorem{lemma}[thm]{Lemma}
\newtheorem{prop}[thm]{Proposition}
\newtheorem{cor}[thm]{Corollary}
\theoremstyle{definition}
\newtheorem{defn}[thm]{Definition}
\theoremstyle{remark}
\newtheorem{rem}[thm]{Remark}
\newcommand{\wi}{\widetilde}
\newcommand{\var}{\varepsilon}

\newcommand{\comment}[1]{}

\begin{document}
\title{Adiabatic limit, Bismut-Freed connection, and real torsion form}
\maketitle

\begin{center}
\author{Xianzhe Dai$\ $  and$\ $  Weiping Zhang}
\end{center}



\comment}

\begin{abstract}For a complex flat vector bundle over a fibered
manifold, we consider the 1-parameter family of certain deformed
sub-signature operators introduced by Ma-Zhang in \cite{MZ}. We
compute the  adiabatic limit of the  Bismut-Freed connection
associated to this family and show that the Bismut-Lott analytic
torsion form shows up naturally under this procedure.
\end{abstract}

\renewcommand{\theequation}{\thesection.\arabic{equation}}
\setcounter{equation}{0}

\section{Introduction} \label{s1}

$\quad$ Adiabatic limit refers to the geometric degeneration when
metric in certain directions are blown up, while the remaining
directions are kept fixed.

Typically, the underlying manifold has a so called fibration
structure (or fiber bundle structure). That is
\[ F \longrightarrow M \stackrel{\pi}{\longrightarrow} B, \]
where $\pi$ is a submersion and $F \simeq F_b=\pi^{-1}(b)$, for
$b\in B$. Given a submersion metric on $M$:
\[ g=\pi^*g_B + g_F, \]
the adiabatic limit refers to the limit as $\epsilon \rightarrow 0$
of
\[ g_{\epsilon}=\epsilon^{-2}\pi^*g_B + g_F. \]
This is first introduced by Witten \cite{w} in his famous work on global
gravitational anomalies.

Witten considered the adiabatic limit of the eta invariant of
Atiyah-Patodi-Singer\cite{APS1}-\cite{APS3}. Full mathematical
treatment and generalizations are given by Bismut-Freed \cite{BF},
Cheeger \cite{C}, Bismut-Cheeger \cite{BC}, Dai \cite{D} among
others. The adiabatic limit of the eta invariant gives rise to the
Bismut-Cheeger eta form, a canonically defined differential form on
the base $B$. The eta form is a higher dimensional generalization of
the eta invariant as it gives the boundary contribution of the
family index theorem for manifolds with boundaries, see
Bismut-Cheeger \cite{bc1,bc2}, and Melrose-Piazza \cite{mp1,mp2}.
The degree zero component of the eta form here is exactly the eta
invariants of the fibers. The nonzero degree components therefore
contains new geometric information about the fibration.

Another important geometric invariant is the analytic torsion. The
adiabatic limit of the analytic torsion has been considered by
Dai-Melrose \cite{dm} (see also the topological treatment of Fried
\cite{fri}, Freed \cite{fr}, and L\"uck-Schick-Thielmann
\cite{lst}). In contrast to the case of the eta invariant, the
adiabatic limit here does not give rise to a higher invariant. This
is because the associated characteristic class involved here is the
Pfaffian, a top form which kills any possible higher degree
components arising from the adiabatic limit.

It should be noted that there is a complex analogue of the analytic
torsion for complex manifolds called the holomorphic torsion. Its
adiabatic limit has been considered by Berthomieu-Bismut
\cite{BerB}. And it does produce the holomorphic torsion form of
Bismut-K\"ohler \cite{bk}. The difference can be explained by the
fact that the characteristic class here is the Todd class---a stable
class.

There is another way to view the higher invariants, namely via
transgression. The eta form transgresses between the Chern-Weil
representative of the family index and its Atiyah-Singer
representative. Similarly, the holomorphic torsion form is the
double transgression of the family index in the complex setting.
Bismut-Lott \cite{BL} uses this view point to define the real
analytic torsion form, a higher dimensional generalization of the
analytic torsion. It is a canonical transgression of certain odd
cohomology classes.

There remains the question of whether the real analytic torsion form
can be obtain from the adiabatic limit process. The purpose of this
paper is to answer this question in the affirmative. We show that,
if one considers the Bismut-Freed connection of the 1-parameter
family of certain deformed sub-signature operators introduced by
Ma-Zhang in \cite{MZ}, its adiabatic limit essentially gives rise to
the Bismut-Lott real analytic torsion form. In fact, it is precisely
the positive degree component of the real analytic torsion form that
is captured here. This should be compared with \cite{dm} where the
adiabatic limit of the analytic torsion captures only the degree $0$
part of the real analytic torsion form.

The paper is organized as follows. We first look at the finite
dimensional case in Section 2. Thus in \S \ref{s2.1}, we introduce
flat cochain complexes, flat superconnections and their
rescalings. The family of deformed sub-signature operators is then
introduced in \S \ref{s2.2}. After some preparatory results, we
define an invariant which should be interpreted as the imaginary
part of the Bismut-Freed connection form for the family of the
deformed sub-signature operators. Finally in \S \ref{s2.3}, we
study the adiabatic limit of our invariant. The fibration case is
set up as an infinite dimensional analog and studied in Section 3.
The flat superconnection in this case is the Bismut-Lott
superconnection and is recalled  in \S 3.1. In \S 3.2, we discuss
the analog of the deformed sub-signature operators in the
fibration case. Then we look into the Bismut-Freed connection and
define a corresponding invariant in \S 3.3. Finally, we study the
adiabatic limit of our invariant in \S 3.4. Our main result is
stated in Theorem \ref{mt}. In \S 3.5, we compare the Bismut-Lott
real analytic torsion form with the torsion form coming out of the
adiabatic limit.

\section{The finite dimensional  case}\label{s2}

$\quad$ In this section, we study the finite dimensional case where
instead of a flat vector bundle over a fibered manifold, we consider
the situation of a flat cochain complex over an even dimensional
manifold. This fits well with the structures considered in \cite{BL}
and \cite{MZ}. The fibered manifold case is the infinite dimensional
analog which will be considered in the next section.

\subsection{Supperconnections and flat cochain complex} \label{s2.1}

$\quad$ Let $(E, v)$ be a $\mathbb{Z}$-graded cochain complex of
 finite rank complex vector bundles over a closed manifold $B$,
\begin{align} \label{2.1}
(E,v):\ \ 0 \rightarrow E^0 \stackrel{v}{\rightarrow} E^1
\stackrel{v}{\rightarrow} \cdots \stackrel{v}{\rightarrow} E^n
\rightarrow 0.
\end{align}
Let $\nabla^E=\oplus_{i=0}^n \nabla^{E^i}$ be a
$\mathbb{Z}$-graded connection on $E$. We call $(E, v,\nabla^E)$ a
flat cochain complex if the following two conditions hold,
\begin{align}\label{2.2}
\left(\nabla^E\right)^2=0, \ \ \ \ \ \ \left[ \nabla^E, \
v\right]=0.
\end{align}

Let $h^E=\oplus_{i=0}^n h^{E^i}$ be a $\mathbb{Z}$-graded
hermitian metric on $E$ and denote by $v^*:\ E^* \rightarrow
E^{*-1}$ the adjoint of $v$ with respect to $h^E$. Let
$(\nabla^E)^*$ denote the adjoint connection of $\nabla^E$ with
respect to $g^E$. Then (cf. \cite[(4.1), (4.2)]{BZ} and \cite[\S
1(g)]{BL})
\begin{align}\label{2.3} \left(\nabla^E\right)^*=\nabla^E + \omega\left(E, h^E\right), \end{align}
where
 \begin{align}\label{2.4}
 \omega(E,
h^E)=\left(h^E\right)^{-1}\left(\nabla^Eh^E\right) .
\end{align}

Consider the superconnections on $E$ in the sense of Quillen
\cite{Q} defined by
\begin{align}\label{2.5}
 A'=\nabla^E+v,\ \ \ \ \ \ A''=\left(\nabla^E\right)^*+v^*.
  \end{align}

Let $N\in\End (E)$ denote the number operator of $E$ which acts on
$E^i$ by multiplication by $i$. We extend $N$ to an element of
$\Omega^0(B, \End( E))$.

Following \cite[(2.26), (2.30)]{BL}, for any $u>0$, set
\begin{align}\label{2.6}
&C_u'=u^{N/2}A'u^{-N/2}=\nabla^E+\sqrt{u}v,\\
&C_u''=u^{-N/2}A''u^{N/2}=\left(\nabla^E\right)^*+\sqrt{u}v^*,\nonumber\\
&C_u={1\over 2}\left(C_u'+C_u''\right),\ \ \ \ \ \ \ D_u={1\over
2}\left(C_u''-C_u'\right). \nonumber
\end{align}
Then we have
\begin{align}\label{2.61}
C_u^2=-D_u^2,\ \ \ \ \ \ \ \left[C_u,D_u\right]=0.
\end{align}
 Let
\begin{align}\label{2.7}
\nabla^{E,e}=\nabla^E+{1\over 2}\omega \left(E,h^E\right)
\end{align}
be the Hermitian connection on $(E, h^E)$ (cf. \cite[(1.33)]{BL}
and \cite[(4.3)]{BZ}). Then
\begin{align}\label{2.8}
C_u=\nabla^{E,e}+{\sqrt{u}\over 2}\left(v+v^*\right)
\end{align}
is a superconnection on $E$, while
\begin{align}\label{2.9}
D_u={1\over 2}\omega \left(E,h^E\right)+{\sqrt{u}\over
2}\left(v^*-v\right)
\end{align}
is an odd element in $C^\infty(B, \Lambda^*(B)\widehat{\otimes}
{\rm End}(E)) $.

\subsection{Deformed signature operators and the Bismut-Freed
connection}\label{s2.2}

$\quad$ We assume in the rest of this section that $p=\dim B$ is
even and $B$ is oriented.

 Let $g^{TB}$ be a Riemannian metric on $TB$. For $X\in TB$, let $c(X)$, $\widehat{c}(X)$
 be the Clifford actions on $\Lambda(T^*B)$ defined by $c(X)=X^*-i_X$, $\widehat{c}(X)
 =X^*+i_X$, where $X^*\in T^*B$ corresponds to $X$ via $g^{TB}$ (cf. \cite[(3.18)]{BL} and \cite[\S 4(d)]{BZ}).
 Then for any $X$, $Y\in TB$,
 \begin{align}\label{2.10}
& c(X)c(Y)+c(Y)c(X)=-2\langle X,Y\rangle, \\
& \widehat{c}(X)\widehat{c}(Y)+\widehat{c}(Y)\widehat{c}(X)
=2\langle X,Y\rangle,  \nonumber \\
& c(X)\widehat{c}(Y)+\widehat{c}(Y)c(X)=0.\nonumber
\end{align}

Let $e_1, \cdots, e_p$ be a (local) oriented  orthonormal basis of
$TB$.
  Set
 \begin{align}\label{2.11}
\tau=\left(\sqrt{-1}\right)^{{p(p+1)\over 2}}c(e_1)\cdots c(e_p) .
\end{align}
 Then $\tau$ is a well-defined self-adjoint  element such that
 \begin{align}\label{2.12}
\tau^2={\rm Id}|_{\Lambda(T^*B)}.
\end{align}

 Let $\mu$ be a Hermitian vector bundle on $B$ carrying a
 Hermitian connection $\nabla^\mu$ with the curvature denoted by
$R ^\mu = (\nabla^{\mu})^2$. Let  $\nabla^{TB}$ be the  Levi-Civita
connection on $(TB, g^{TB})$ with its curvature $R^{TB}$. Let
$\nabla^{\Lambda(T^*B)}$ be the Hermitian connection on
$\Lambda(T^*B)$
 canonically induced from $\nabla^{TB}$.
 Let $\nabla^{\Lambda(T^*B)\otimes\mu\otimes E,e}$ be the tensor product
 connection on $\Lambda(T^*B)\otimes\mu\otimes E$ given by
 \begin{align}\label{2.13}
\nabla^{\Lambda(T^*B)\otimes\mu\otimes E,e}=
 \nabla^{\Lambda(T^*B)}\otimes {\rm Id}_{\mu\otimes E}
 +{\rm Id}_{\Lambda(T^*B)}\otimes\nabla^\mu\otimes{\rm Id}_E
 +{\rm Id}_{\Lambda(T^*B)\otimes\mu}\otimes\nabla^{E,e}.
\end{align}

 Let the Clifford actions $c$, $\widehat{c}$ extend to actions
 on $\Lambda(T^*B)\otimes\mu\otimes E$ by acting as identity on $\mu\otimes E$.
Let $\varepsilon$ be the induced ${\bf Z}_2$-grading operator on
$E$, i.e.,
 $\varepsilon=(-1)^N$ on $E$. We extend $\varepsilon$ to an action on
 $\Lambda(T^*B)\otimes\mu\otimes E$ by acting as identity on $\Lambda(T^*B)\otimes\mu$.

 Let $\tau\widehat{\otimes}\varepsilon$ define the
 $\mathbb{Z}_2$-grading on $(\Lambda(T^*B)\otimes\mu)\widehat{\otimes}
 E$, then
\begin{align}\label{2.14} D_{\rm sig}^{\mu\otimes
E}= \sum_{i=1}^pc(e_i)
 \nabla_{e_i}^{\Lambda^*(T^*B)\otimes\mu\otimes E,e} \end{align}
defines the twisted signature operator with respect to this
$\mathbb{Z}_2$-grading. Playing an important role here is its
deformation, given by
\begin{align}\label{2.15}
 D_{{\rm sig},u}^{\mu\otimes E}=D_{\rm sig}^{\mu\otimes E}
+{\sqrt{u}\over 2}\left(v+v^*\right),  \end{align} with $u> 0$,
which might be thought of as a quantization of $C_u$.

   Let $Y_u$ be the   skew adjoint
  element in $\End^{\rm odd} (\Lambda^*(T^*B)\otimes\mu\otimes E)$ defined
  by (cf.  \cite[(2.18)]{MZ})
\begin{align}\label{2.16}
Y_u={1\over 2}\sum_{i=1}^pc(e_i)\omega\left(E,h^E\right)(e_i)
+{\sqrt{u}\over 2}\left(v^*-v\right),
\end{align}
which might be thought of as a quantization of $D_u$.

Now following \cite[Definition 2.3]{MZ}, for any $r\in {\bf R}$,
define
\begin{align}\label{2.17}
 D_{{\rm sig},u}^{\mu\otimes E}(r)=D_{{\rm sig},u}^{\mu\otimes E}+
\sqrt{-1}r Y_u.
\end{align}

From (\ref{2.14})-(\ref{2.17}), one has (cf. \cite[(2.22)]{MZ})
\begin{multline} \label{2.18}
 D_{{\rm sig},u}^{\mu\otimes E}(r)=\sum_{i=1}^p{c}(e_i)
 \left(\nabla_{e_i}^{\Lambda^*(T^*B)\otimes\mu\otimes E,e}
 +{\sqrt{-1}r\over 2}
 \omega\left(E,h^E\right)(e_i)\right) \\
+{\sqrt{u}\over
2}\left(\left(1-\sqrt{-1}r\right)v+\left(1+\sqrt{-1}r\right)v^*\right).
\end{multline}

\begin{prop}\label{axpd} We have the following asymptotic
expansion
\begin{multline}\label{eaxpd}
 {\rm Tr}_s\left[  D^{\mu\otimes
E}_{{\rm sig},u}(r) Y_u e^{-t \left(D^{\mu\otimes E}_{{\rm
sig},u}(r)\right)^2}\right]
  =  c_0(u,r)\,  + c_{1}(u,r)\, t + \cdots \end{multline}
as $t \rightarrow 0$. The expansion is uniform for $(u, r)$ in a
compact set.
\end{prop}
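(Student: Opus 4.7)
The plan is to view $(D^{\mu\otimes E}_{{\rm sig},u}(r))^2$ as a generalized Laplacian depending smoothly on $(u,r)$, apply the standard heat kernel asymptotic expansion, and invoke a Getzler-type rescaling to rule out negative powers of $t$ in the resulting supertrace expansion. Write $D_u(r) := D^{\mu\otimes E}_{{\rm sig},u}(r)$ for brevity. From (\ref{2.18}), $D_u(r)$ is of Dirac type: its principal symbol is $c(\xi)$, and all lower-order terms (those involving $\omega(E,h^E)$, $v$, and $v^*$) depend smoothly on the parameters $(u,r)$. Hence $D_u(r)^2$ is a generalized Laplacian whose coefficients are smooth in $(u,r)$, and standard elliptic heat kernel theory provides a smooth Schwartz kernel with on-diagonal expansion
\begin{equation*}
e^{-tD_u(r)^2}(x,x) \sim (4\pi t)^{-p/2}\sum_{k \geq 0} t^k \Phi_k(x;u,r),\quad t \to 0,
\end{equation*}
uniform for $(u,r)$ in compact subsets. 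Composition with the first-order operator $D_u(r) Y_u$ and restriction to the diagonal preserves this form: half-integer powers of $t$ vanish because differentiating the Gaussian factor brings down a factor proportional to $(y-x)^j/(2t)$ which vanishes at $y = x$. Integrating the pointwise supertrace over $B$ then yields an expansion
\begin{equation*}
{\rm Tr}_s\left[D_u(r) Y_u e^{-tD_u(r)^2}\right] \sim \sum_{k \geq 0} c_k(u,r)\, t^{k-p/2}, \quad t \to 0.
\end{equation*}

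The crucial step is to show that $c_k(u,r) = 0$ for $k < p/2$, so that the expansion actually starts at $t^0$. This is a Getzler-type rescaling argument: one rescales the Clifford generators by $c(e^i) \mapsto t^{-1/2} e^i\wedge - t^{1/2} \iota_{e_i}$ and time accordingly; then $\sqrt{t}\,D_u(r)$ converges to a generalized harmonic oscillator with coefficients polynomial in the exterior algebra, and $Y_u$ admits a bounded limit. Because the supertrace over $\Lambda(T^*B)$ is the Berezin integral picking out only the top exterior-degree component, only terms of order $t^0$ or higher can survive in the rescaled trace, yielding the claimed expansion $c_0(u,r) + c_1(u,r)\,t + \cdots$. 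Uniformity on $(u,r)$-compacts follows at each step from the smooth parametric dependence of the operators involved.

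The main technical obstacle is the careful execution of the Getzler rescaling for the present operator: one must verify that the $r$-dependent zeroth-order terms in (\ref{2.18}) and the skew-adjoint factor $Y_u$ rescale consistently and produce a well-defined limit. This is, however, by now a standard manipulation from the heat-kernel proof of the local index theorem, and no essentially new analytic difficulty is expected.
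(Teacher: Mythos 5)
Your first step (the integer-power expansion $\sum_{k\ge 0}c_k(u,r)\,t^{k-p/2}$, uniform in $(u,r)$) is fine and matches the paper's, which obtains the same thing via the two-Grassmann-variable trick. The gap is in the second step, which is where the actual content of the proposition lies. The prefactor $D^{\mu\otimes E}_{{\rm sig},u}(r)\,Y_u$ is a product of \emph{two} objects of Clifford degree one: the leading part $\sum_i c(e_i)\nabla_{e_i}$ of $D^{\mu\otimes E}_{{\rm sig},u}(r)$ and the term $\frac12\sum_i c(e_i)\omega(E,h^E)(e_i)$ in $Y_u$. Under the Getzler rescaling $c(e_i)\mapsto t^{-1/2}e^i\wedge-t^{1/2}\iota_{e_i}$ each of these contributes a factor $t^{-1/2}$; in particular your assertion that ``$Y_u$ admits a bounded limit'' is false --- it is $\sqrt{t}\,Y_u$ that converges (to $\frac12\omega(E,h^E)$). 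Consequently the Berezin-integral/degree-counting argument you invoke, carried out correctly, only shows that the supertrace is $O(t^{-1})$: it kills $c_k$ for $k<p/2-1$ but not the coefficient of $t^{-1}$. Asserting that ``only terms of order $t^0$ or higher can survive'' is exactly the point that needs proof, not a consequence of the top-degree projection.

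The vanishing of the residual $t^{-1}$ coefficient is a genuine Bismut--Freed--type cancellation. The paper handles it by proving $\lim_{t\to 0}t\,{\rm Tr}_s\bigl[D^{\mu\otimes E}_{{\rm sig},u}(r)Y_u e^{-t(D^{\mu\otimes E}_{{\rm sig},u}(r))^2}\bigr]=0$, using the Lichnerowicz formula (2.27) together with the argument of \cite{BF}: one absorbs the first-order prefactor into the exponent with a Grassmann variable, conjugates by the exponential transform $e^{z\sum x_ic(e_i)/2\sqrt t}$ to remove the singular cross term $z\,c(e_i)\nabla_{e_i}/\sqrt t$, and only then applies the rescaling; the would-be $t^{-1}$ coefficient is then seen to vanish because the $z$-linear part of the limiting exponent degenerates. (The same mechanism is carried out in detail in the paper's Proposition 3.5, where, tellingly, the expansion in the fibration case starts at $t^{-3/2}$ or $t^{-1}$ rather than $t^0$ --- showing that the starting order is \emph{not} forced by the Berezin integral alone but depends on this finer cancellation.) To repair your proof you must supply this cancellation explicitly rather than attributing the conclusion to the top-degree projection.
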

\begin{proof}
We introduce two auxiliary Grassmann variables $z_1, z_2$ and write
\begin{align}\label{auxgr} {\rm Tr}_s\left[  D^{\mu\otimes
E}_{{\rm sig},u}(r)Y_u e^{-t \left(D^{\mu\otimes E}_{{\rm
sig},u}(r)\right)^2}\right] = \hspace{2in} \\
- t^{-2}{\rm Tr}_{s, z_1, z_2}\left[ e^{-t \left([D^{\mu\otimes
E}_{{\rm sig},u}(r)]^2 -z_1 D^{\mu\otimes E}_{{\rm sig},u}(r)- z_2
Y_u \right)} \right] \nonumber .
\end{align}
Here the minus sign comes from the order of the appearance of $z_1,
z_2$ and $D^{\mu\otimes E}_{{\rm sig},u}(r), Y_u$.

Applying the standard elliptic theory to the right hand side of
(\ref{auxgr}), we derive an asymptotic expansion
\begin{multline}
{\rm Tr}_s\left[ D^{\mu\otimes E}_{{\rm sig},u}(r) Y_u  e^{-t
\left(D^{\mu\otimes E}_{{\rm sig},u}(r)\right)^2}\right]= c_{-p/2
-2}(u,r) t^{-p/2-2} + c_{-p/2 -1}(u,r) t^{-p/2-1} + \nonumber \\
\hspace{2.5in} \cdots +c_{0}(u,r) + c_{1}(u,r)\,
t + \cdots . \nonumber
\end{multline}
On the other hand, by the Lichnerowicz formula (Cf. (\ref{2.27}))
and the same argument as in \cite{BF}, we have
\[  \lim_{t\rightarrow 0} t\, {\rm Tr}_s\left[ D^{\mu\otimes E}_{{\rm sig},u}(r) Y_u  e^{-t
\left(D^{\mu\otimes E}_{{\rm sig},u}(r)\right)^2}\right]=0.
\]
It follows then that $c_{i}(u,r)=0$ for $-p/2 -2\leq i \leq -1$.
Thus, the asymptotic expansion starts with the constant term.
\end{proof}

Since
\[ {\rm Tr}_s\left[ D^{\mu\otimes E}_{{\rm sig},u}(r) Y_u  e^{-t
\left(D^{\mu\otimes E}_{{\rm sig},u}(r)\right)^2}\right] =
- {\rm Tr}_s\left[  Y_u  D^{\mu\otimes E}_{{\rm sig},u}(r) e^{-t
\left(D^{\mu\otimes E}_{{\rm sig},u}(r)\right)^2}\right] \]
is also exponentially decaying as $t\rightarrow \infty$, the quantity on the right hand side of the following
definition is well-defined.

 \begin{defn}\label{t2.1} We define
\begin{align} \label{2.19}
\delta_u(E, v)(r) = \int_0^{\infty} {\rm Tr}_s\left[ D^{\mu\otimes
E}_{{\rm sig},u}(r)Y_u e^{-t \left(D^{\mu\otimes E}_{{\rm
sig},u}(r)\right)^2}\right] dt.
\end{align}
\end{defn}

\begin{Rem}\label{t2.2} If $H^*(E,v)=\{0\}$, i.e., $(E,v)$ is
acyclic,   by \cite[(2.27)]{MZ}, which we recall as follows,
\begin{align} \label{2.20}
\left(\left(1-\sqrt{-1}r\right)v+\left(1+\sqrt{-1}r\right)v^*\right)^2=\left(1+r^2\right)
\left(v+v^*\right)^2,
\end{align}
 (\ref{2.18}) and  proceed as in \cite{BC}, one sees that when $u>0$ is large
enough, $D_{{\rm sig},u}^{\mu\otimes E}(r)$ is invertible for
fixed $r\in {\bf R}$. Since by (\ref{2.17}) one has
$${\partial D^{\mu\otimes E}_{{\rm
sig},u}(r)\over\partial r}=\sqrt{-1}Y_u,$$
${\sqrt{-1}\over 2}\delta_u(E, v)(r)$ is (the imaginary part of) the
Bismut-Freed connection form (\cite{BF}, see also \cite[(3.8)]{DF})  over $r$ of the Quillen
determinant line bundle of the $r$-family operators $\{D_{{\rm
sig},u}^{\mu\otimes E}(r)\}_{r\in{\mathbb R}}$.
\end{Rem}

\subsection{Adiabatic limit as $u\rightarrow +\infty$}\label{s2.3}

$\quad$ We first rewrite $\delta_u(E, v)(r)$ as
\begin{align} \label{2.21} \delta_u(E, v)(r) =
\int_0^{\infty} {\rm Tr}_s\left[ D_{\epsilon}(r)Y^{\epsilon} e^{-t
D^2_{\epsilon}(r)}\right] dt,
\end{align}
where $\epsilon=u^{-\frac{1}{2}}$ and
\begin{align} \label{2.22} Y^{\epsilon}=\frac{\epsilon }{2} c(\omega) + \frac{1}{2}(v^*-v), \end{align}
\[ D_{\epsilon}(r)=\epsilon D_{{\rm sig},u}^{\mu\otimes E}+
\sqrt{-1}r Y^{\epsilon}. \]

We fix a square root of $\sqrt{-1}$ and let
$\varphi:\Lambda(T^*B)\rightarrow\Lambda(T^*B)$ be the
homomorphism defined by
$\varphi:\omega\in\Lambda^i(T^*B)\rightarrow (2\pi
\sqrt{-1})^{-i/2}\omega.$ The formulas in what follows will not
depend on the choice of the square root of $\sqrt{-1}$.

Let $\rL(TB,\nabla^{TB})$ be the Hirzebruch characteristic form
defined by
 $$\rL(TB,\nabla^{TB})=\varphi\,
 {\det}^{1/2}\left({R^{TB}\over \tanh\left({R^{TB}/2}\right)} \right),$$
 while ${\rm ch}(\mu,\nabla^\mu)$ be the Chern character form
 defined by
 $$
{\rm ch}(\mu,\nabla^\mu)=\varphi\,\tr\left[\exp(-R^\mu)\right].$$

\begin{prop}\label{t2.3} We have
\begin{multline}\label{2.23}
 \lim_{\epsilon\rightarrow 0} {\rm Tr}_s\left[ D_{\epsilon}(r)Y^{\epsilon} e^{-t
 D^2_{\epsilon}(r)}\right]
  =  - 
  \int_B L\left(TB, \nabla^{TB}\right)\ch\left(\mu,
  \nabla^{\mu}\right)\\ \cdot
 \varphi\,{\rm Tr}_s\left[t^{- {1\over 2} }D_t
 \left(
\frac{1}{2}\left(v+v^*\right) + {\sqrt{-1}r\over
2}\left(v^*-v\right)\right)
   e^{- \left(C_t+\sqrt{-1} r D_t\right)^2} \right]  .\end{multline}
\end{prop}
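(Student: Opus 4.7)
The plan is to adapt local index theory with Getzler rescaling to this adiabatic limit. Decompose
\[
 D_\epsilon(r) = \epsilon \wi{D}(r) + M(r), \qquad Y^\epsilon = \tfrac{\epsilon}{2} c\bigl(\omega(E,h^E)\bigr) + \tfrac{1}{2}(v^*-v),
\]
where $\wi{D}(r) = \sum_i c(e_i)\bigl(\nabla_{e_i}^{\Lambda^*(T^*B)\otimes\mu\otimes E,e} + \tfrac{\sqrt{-1}\,r}{2}\omega(E,h^E)(e_i)\bigr)$ is a Dirac-type operator on $B$ twisted by $\mu \otimes E$, and $M(r) = \tfrac{1}{2}(v+v^*) + \tfrac{\sqrt{-1}\,r}{2}(v^*-v)$ is a zero-order endomorphism of $E$. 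Then
\[
D_\epsilon^2(r) = \epsilon^2 \wi{D}(r)^2 + \epsilon\{\wi{D}(r), M(r)\} + M(r)^2,
\]
and the Lichnerowicz formula applied to $\wi{D}(r)^2$ exposes a Bochner Laplacian, the scalar curvature of $B$, and the curvatures of $\mu$ and of the $\sqrt{-1}r$-deformed connection on $E$.

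Next I would carry out Getzler rescaling at each $x_0 \in B$: in geodesic normal coordinates, rescale $c(e_i) \mapsto \epsilon^{-1/2} c(e_i)$ while keeping $\widehat{c}(e_i)$ and all endomorphisms of $\mu \otimes E$ unchanged, and rescale space so that the Laplacian part of $\epsilon^2 \wi{D}(r)^2$ balances with the Clifford curvature term. As $\epsilon \to 0$ the heat kernel of $D_\epsilon^2(r)$ converges to that of a generalized harmonic oscillator on $T_{x_0} B$ with coefficients in $\Lambda(T^*_{x_0} B) \otimes \mu_{x_0} \otimes E_{x_0}$, whose Berezin integral with respect to the grading $\tau \widehat{\otimes} \varepsilon$ of (\ref{2.11}) is evaluated by Mehler's formula. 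Because the signature grading uses Clifford top forms, the resulting characteristic form is the Hirzebruch $L$-form $\rL(TB, \nabla^{TB})$ in place of the $\widehat{A}$-form, and the $\mu$-twist contributes $\ch(\mu, \nabla^\mu)$ in the usual way. The rescaled Clifford operators become exterior products $e^i \wedge$, so $\epsilon \sum_i c(e_i)\nabla_{e_i}^{E,e}$ converges to $\nabla^{E,e}$ viewed as an element of $\Omega^1(B, \End E)$, and similarly $\tfrac{\sqrt{-1}\,r}{2}\epsilon \sum_i c(e_i)\omega(E,h^E)(e_i)$ converges to $\tfrac{\sqrt{-1}\,r}{2}\omega(E,h^E)$.

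Combining these, the connection and endomorphism contributions assemble inside the heat exponential into $-(C_t + \sqrt{-1}\,r\,D_t)^2$ at the fixed parameter $t$, read off from (\ref{2.8}), (\ref{2.9}), while the Getzler-rescaled pre-factor $D_\epsilon(r) Y^\epsilon$ becomes $t^{-1/2} D_t\bigl(\tfrac{1}{2}(v+v^*) + \tfrac{\sqrt{-1}\,r}{2}(v^*-v)\bigr)$. The principal technical obstacle is the careful bookkeeping of signs, powers of $\sqrt{-1}$ tracked by $\varphi$, and the $\sqrt{t}$-rescaling hidden in $C_t, D_t$: the $\tfrac{1}{2}(v+v^*)$ in $M(r)$ must be matched against the $\tfrac{\sqrt{t}}{2}(v+v^*)$ in $C_t$, and this is where the factor $t^{-1/2}$ in the pre-factor arises simultaneously with the $\sqrt{t}$-rescaling of $v, v^*$ inside $C_t + \sqrt{-1}\,r\,D_t$. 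The cross term $\epsilon\{\wi{D}(r), M(r)\}$ in $D_\epsilon^2(r)$ is first-order in $\epsilon$ and vanishes in the limit, and the overall minus sign comes from the odd reordering of $D_\epsilon(r)$ past $Y^\epsilon$ through the supertrace. Uniformity in $t$ on compact subsets of $(0, \infty)$ follows from standard Duhamel-type heat kernel estimates.
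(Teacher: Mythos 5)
Your overall strategy --- the decomposition $D_\epsilon(r)=\epsilon\wi{D}(r)+M(r)$, Getzler rescaling in the base directions, Mehler's formula producing $\rL(TB,\nabla^{TB})\,\ch(\mu,\nabla^\mu)$, and the assembly of the surviving terms into $(C_t+\sqrt{-1}\,rD_t)^2$ --- is the paper's. But there are two genuine gaps. The first is the treatment of the first-order differential operator $D_\epsilon(r)$ sitting as a prefactor in front of the heat operator: you simply assert that the rescaled prefactor ``becomes'' $t^{-1/2}D_t\,M(r)$, i.e.\ that only the zeroth-order part of $D_\epsilon(r)$ survives. This is the heart of the computation and is not automatic: naively $\epsilon\sum_i c(e_i)\nabla_{e_i}$ rescales to $t^{-1/2}\sum_i e^i\wedge\nabla_{e_i}$ acting on the limiting kernel, which has no reason to vanish. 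The paper makes this rigorous with the Bismut--Freed/Bismut--Cheeger device: introduce a Grassmann variable $z$, rewrite the supertrace as $-{\rm Tr}_{s,z}\bigl[Y^\epsilon t^{-1/2}e^{-tD_\epsilon^2(r)+z\sqrt{t}D_\epsilon(r)}\bigr]$, and conjugate by $e^{z\sum_i x_i c(e_i)/(2\sqrt{t}\epsilon)}$ so that the singular part of $z\sqrt{t}D_\epsilon(r)$ is absorbed into the square $-t\bigl(\epsilon\nabla_{e_i}+\frac{z}{2\sqrt{t}}c(e_i)\bigr)^2$; only $-z\sqrt{t}M(r)$ then survives the rescaling. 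Some such argument (or an equivalent Duhamel/off-diagonal-decay substitute) is indispensable, and it is also what produces the sign and the reordering you attribute loosely to ``odd reordering through the supertrace.''

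The second gap is an outright error: the claim that the cross term $\epsilon\{\wi{D}(r),M(r)\}$ ``is first-order in $\epsilon$ and vanishes in the limit'' is false, and it contradicts your own assertion that the exponent assembles into $(C_t+\sqrt{-1}\,rD_t)^2$. That cross term contains $t\epsilon\sum_i c(e_i)\nabla_{e_i}M(r)$, and the Getzler rescaling $c(e_i)\to(\sqrt{t}\epsilon)^{-1}e^i\wedge{}-\sqrt{t}\epsilon\, i_{e_i}$ promotes it to $\sqrt{t}\,\nabla M(r)$ in the limit (the term $t^{1/2}\nabla V$ in the paper's (\ref{2.28})). This is exactly the commutator $\bigl[\nabla^{E,e}+\frac{\sqrt{-1}r}{2}\omega,\ \sqrt{t}V\bigr]$ inside $(C_t+\sqrt{-1}\,rD_t)^2$; if it really vanished you would be left with only $\bigl(\nabla^{E,e}+\frac{\sqrt{-1}r}{2}\omega\bigr)^2+tV^2$ in the exponent, which is not the stated answer. (A related imprecision: the rescaling acts by $\epsilon^{-1}$, not $\epsilon^{-1/2}$, on the exterior part of $c(e_i)$, and it scales the interior part oppositely rather than leaving $c(e_i)$ a scalar multiple of itself.)
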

\begin{proof}
 As in \cite{BF} and \cite{BC}, we introduce an auxiliary Grassmann variable $z$ and rewrite
\begin{align}\label{2.24} {\rm Tr}_s\left[ D_{\epsilon}(r)Y^{\epsilon} e^{-t D^2_{\epsilon}(r)}\right] =
-{\rm Tr}_{s, z}\left[Y^{\epsilon} t^{-\frac{1}{2}} e^{-t
D^2_{\epsilon}(r)+ z\sqrt{t} D_{\epsilon}(r)}\right],
 \end{align}
 where for elements of the form $A+zB$ with $A,\ B$ containing no
 $z$, we have as in \cite{BF} and \cite{BC} that ${\rm
 Tr}_{s,z}[A+zB]={\rm
 Tr}_{s}[B].$

By (\ref{2.15}) and (\ref{2.22}), one has
\begin{multline}\label{2.25}
D_{\epsilon}(r)=\epsilon D_{{\rm sig}}^{\mu\otimes E} + \frac{1
}{2}\left(v+ v^*\right) + \sqrt{-1}r \left(\frac{\epsilon }{2}
c(\omega) + \frac{1}{2}\left(v^*-v\right)\right)\\
=\epsilon\left( D_{{\rm sig}}^{\mu\otimes E}+ \sqrt{-1}r
 \frac{1 }{2} c(\omega)\right)+\frac{1 }{2} \left(v+ v^*\right) + \sqrt{-1}r
\frac{1}{2}\left(v^*-v\right).
\end{multline}
Denote by
\begin{align}\label{2.26}
V=\frac{1 }{2} \left(v+ v^*\right) + \sqrt{-1}r
\frac{1}{2}\left(v^*-v\right).
\end{align}
By Lichnerowicz formula, we have (for simplicity we denote
$\nabla=\nabla^{\Lambda^*(T^*B)\otimes\mu\otimes E,e}$)
\begin{multline}\label{2.27}
t D^2_{\epsilon}(r)-z\sqrt{t} D_{\epsilon}(r) =  t\epsilon^2
\left(D_{{\rm sig}}^{\mu\otimes E}\right)^2 + t\epsilon^2 \left[
D_{{\rm sig}}^{\mu\otimes E},
\sqrt{-1}r \frac{1}{2} c(\omega)\right] + t\epsilon \left[ D_{{\rm sig}}^{\mu\otimes E},  V\right] \\
 + t \left( \sqrt{-1}r \frac{\epsilon }{2} c(\omega) + V\right)^2
 - z\sqrt{t}\left(\epsilon D_{{\rm sig}}^{\mu\otimes E} + \sqrt{-1}r \frac{\epsilon }{2} c(\omega)
+ V\right) \\
 =  -t\left(\epsilon \nabla_{e_i} + \frac{1}{2\sqrt{t}}z  {c}(e_i)\right)^2
 +\frac{t\epsilon^2}{4} k^{TB}+ \frac{t\epsilon^2}{2}  {c}(e_i)  {c}(e_j)\otimes R^{\mu\otimes E, e}(e_i, e_j)
\\
 + \frac{t\epsilon^2}{8} R^{TB}_{ijkl} c(e_i)c(e_j) \hat{c}(e_k)
 \hat{c}(e_l) + t\epsilon c(e_i)\nabla_{e_i}V + \frac{\sqrt{-1} r}{2} t\epsilon^2
c(e_i)c(e_j) \nabla_{e_i}\omega_j \\
-t\epsilon^2\sqrt{-1}r\sum_{i=1}^p \omega(e_i)\nabla_{e_i}
 + t\left(\frac{\sqrt{-1}r}{2} \epsilon c(\omega) + V\right)^2 - z\sqrt{t}  V -
  z \frac{\sqrt{-1}r}{2} \sqrt{t} \epsilon c(\omega) ,
\end{multline}
where $R^{TB}$ is the Riemannian curvature and $k^{TB}$ is the
scalar curvature of $g^{TB}$, while $R^{\mu\otimes E, e}$ is the
curvature of the connection on $\mu\otimes E$ obtained through
$\nabla^\mu$ and $\nabla^{E,e}$.

Now we find ourself exactly in the situation of \cite{BC}. Near
any point $x$, take a normal coordinate system $\{x_i\}$ and the
associated orthonormal basis $\{e_i\}$.  We first conjugate $t
D^2_{\epsilon}(r)-z\sqrt{t} D_{\epsilon}(r)$ by the exponential
$e^{\frac{z\sum_{i=1}^px_i c(e_i)}{2\sqrt{t}\epsilon}}$ and then
apply the Getzler transformation $G_{\sqrt{t}\epsilon}$. One finds
that after these procedures, the operator $t
D^2_{\epsilon}(r)-z\sqrt{t} D_{\epsilon}(r) $ tends to, as
$\epsilon\rightarrow 0$,
\begin{multline}\label{2.28}
  -\left(\partial_i + \frac{1}{4} R^{TB}_{ij} x_j\right)^2 + \frac{1}{4}R^{TB}_{kl}  \hat{c}(e_k)
 \hat{c}(e_l) + R^{\mu\otimes E, e}
+ t^{\frac{1}{2}} \nabla V + \frac{\sqrt{-1} r}{2} \nabla \omega \\
 + \left( \frac{\sqrt{-1} r}{2} \omega + t^{\frac{1}{2}} V\right)^2 - z\sqrt{t}
  V  \\
 =   -\left(\partial_i + \frac{1}{4} R^{TB}_{ij} x_j\right)^2 + \frac{1}{4}R^{TB}_{kl}
  \hat{c}(e_k) \hat{c}(e_l) +R^\mu\\ +
\left(\nabla^{E,e} + \frac{\sqrt{-1} r}{2} \omega +
t^{\frac{1}{2}} V\right)^2
-z  \sqrt{t}  V   \\
 =  -\left(\partial_i + \frac{1}{4} R^{TB}_{ij} x_j\right)^2 + \frac{1}{4}R^{TB}_{kl}  \hat{c}(e_k) \hat{c}(e_l) +
\left(C_t+\sqrt{-1} r D_t\right)^2 \\ - z\sqrt{t}\left(
\frac{1}{2}\left(v+v^*\right) + {\sqrt{-1}r\over
2}\left(v^*-v\right)\right) .
\end{multline}

On the other hand, by (\ref{2.22}) it is clear that under the same
procedures,
   $Y^{\epsilon}$ tends to, as
$\epsilon\rightarrow 0$,
\begin{align}\label{2.29}
t^{-\frac{1}{2}}\left(\frac{1 }{2} \omega  +
\frac{\sqrt{t}}{2}\left(v^*-v\right)\right)=t^{-\frac{1}{2}}
D_t.\end{align}

From (\ref{2.24}), (\ref{2.28}) and (\ref{2.29}), by proceeding the
by now standard local index techniques, and keeping in mind that the
supertrace in the left hand sides of (\ref{2.23}) and (\ref{2.24})
are respect to the $\mathbb{Z}_2$-grading defined by
$\tau\widehat{\otimes}\varepsilon$, one derives (\ref{2.23}).
\end{proof}

We now examine the terms appearing in the right hand side of
(\ref{2.23}).

By (cf. \cite[(2.34)]{MZ})
\begin{align}\label{2.30}
\left(C_t+\sqrt{-1}r D_t\right)^2=\left(1+r^2\right)C_t^2
=-\left(1+r^2\right)D_t^2,
\end{align}
and
\begin{align} \label{2.33}v+v^* =
-2t^{-\frac{1}{2}} \left[N, D_t\right],\ \ \ \
v^*-v=-2t^{-\frac{1}{2}} \left[N, C_t\right],\end{align}
 we have
\begin{multline} \label{2.31}
-{\rm Tr}_s\left[t^{-{1\over 2}}D_t\left(
\frac{1}{2}\left(v+v^*\right) + {\sqrt{-1}r\over
2}\left(v^*-v\right)\right)
    e^{- \left(C_t+\sqrt{-1} r D_t\right)^2} \right]  \\
 = -\frac{ {1}}{2\sqrt{t}} {\rm Tr}_s\left[D_t\left(v+v^*\right)e^{\left(1+r^2\right)D_t^2}\right]
  - \frac{\sqrt{-1}r}{2\sqrt{t}} {\rm Tr}_s\left[D_t\left(v^*-v\right)e^{\left(1+r^2\right)D_t^2}\right]\\
=
  {1\over t}{\rm Tr}_s\left[D_t\left[N, D_t\right]e^{\left(1+r^2\right)D_t^2}\right]+
 {\sqrt{-1}r\over t} {\rm Tr}_s\left[D_t\left[N, C_t\right]e^{\left(1+r^2\right)D_t^2}\right]\\
  =   -\frac{ {1}}{t} {\rm Tr}_s\left[ND_t^2e^{\left(1+r^2\right)D_t^2}
  -D_tND_te^{\left(1+r^2\right)D_t^2}\right]
  +{\sqrt{-1}r\over t}d {\rm Tr}_s\left[
   N D_te^{\left(1+r^2\right)D_t^2}\right],
\end{multline}
where in the last equality we have used (\ref{2.61}) (compare also
with \cite[(2.75)]{MZ}).

We are now ready to prove the following main result of
this section.

\begin{thm}\label{t2.4} Under the assumption that the flat cochain complex $(E, v)$ is acyclic: $H^*(E,
v)=0$,
  the following identity holds,
\begin{align}\label{2.35}
{1\over 2}\lim_{u \rightarrow +\infty} \delta_u(E, v)(r)= \int_B
L\left(TB, \nabla^{TB}\right)
 \ch\left(\mu, \nabla^{\mu}\right) {\cal T}_r, \end{align}
where
\begin{align}\label{2.36}  {\cal T}_r=
 -\int_{0}^{\infty} \varphi\,
 {\rm Tr}_s\left[ND_t^2e^{\left(1+r^2\right)D_t^2}\right] {dt\over t}.  \end{align}
\end{thm}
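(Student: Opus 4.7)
The plan is to start from the rewriting (2.21), which with $\epsilon=u^{-1/2}$ converts the limit $u\to+\infty$ into the adiabatic limit $\epsilon\to 0$. I will interchange $\lim_{\epsilon\to 0}$ with the $t$-integral, invoke Proposition 2.3 for the pointwise limit of the integrand, and then use the algebraic identity (2.31) together with supertrace cyclicity and Stokes' theorem on the closed base $B$ to put the answer in the form $2\int_B L(TB,\nabla^{TB})\,\ch(\mu,\nabla^{\mu})\,\mathcal{T}_r$, after which the factor $\tfrac{1}{2}$ on the left of (2.35) matches.

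Granted the interchange, the algebra proceeds as follows. Proposition 2.3 replaces the integrand by $-\int_B L(TB,\nabla^{TB})\,\ch(\mu,\nabla^{\mu})\,\varphi\,Q(t,r)$, where $Q(t,r)={\rm Tr}_s[t^{-1/2}D_t(\tfrac{1}{2}(v+v^*)+\tfrac{\sqrt{-1}r}{2}(v^*-v))\,e^{-(C_t+\sqrt{-1}rD_t)^2}]$. Identity (2.31) rewrites $-Q(t,r)$ as $-\tfrac{1}{t}{\rm Tr}_s[(ND_t^2-D_tND_t)e^{(1+r^2)D_t^2}]+\tfrac{\sqrt{-1}r}{t}d\,{\rm Tr}_s[ND_t\,e^{(1+r^2)D_t^2}]$. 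Since $D_t$ is odd for the total grading $\tau\widehat{\otimes}\varepsilon$ and commutes with $D_t^2$, supertrace cyclicity gives ${\rm Tr}_s[D_tND_t\,e^{(1+r^2)D_t^2}]=-{\rm Tr}_s[ND_t^2\,e^{(1+r^2)D_t^2}]$, so the bracket doubles to $2\,{\rm Tr}_s[ND_t^2\,e^{(1+r^2)D_t^2}]$. The $d$-exact term integrates to zero over the closed base $B$ by Stokes' theorem. Integrating what remains in $t$ against $dt$, invoking Fubini to exchange $\int_B$ and $\int_0^\infty$, and comparing with the definition (2.36) of $\mathcal{T}_r$ yields exactly $2\int_B L\cdot\ch\cdot\mathcal{T}_r$.

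The main technical obstacle is justifying the interchange $\lim_{\epsilon\to 0}\int_0^\infty=\int_0^\infty\lim_{\epsilon\to 0}$, equivalently producing $\epsilon$-uniform bounds on the integrand. I would split the $t$-integration into $\int_0^1+\int_1^\infty$. For the small-$t$ part, a uniform-in-$\epsilon$ refinement of the Getzler-rescaling argument already used in Propositions 2.1 and 2.3 produces a bound that is bounded (in fact finite at $t=0$ after the rescaling) as $t\downarrow 0$. For the large-$t$ part, the acyclicity hypothesis $H^*(E,v)=0$ is essential: by (2.25) the $\epsilon=0$ limit of $D_\epsilon(r)$ is $\tfrac{1}{2}(v+v^*)+\tfrac{\sqrt{-1}r}{2}(v^*-v)$, whose square equals $\tfrac{1+r^2}{4}(v+v^*)^2$ by (2.20); acyclicity forces $v+v^*$ to be invertible and thus $(v+v^*)^2>0$. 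Standard perturbation theory then yields a spectral gap for $D_\epsilon(r)^2$ uniform on some interval $\epsilon\in[0,\epsilon_0]$, giving the exponential decay in $t$ needed for dominated convergence. These two estimates secure the interchange and complete the proof.
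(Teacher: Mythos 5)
Your proposal is correct and follows essentially the same route as the paper: rewrite via (2.21), justify the interchange of $\lim_{\epsilon\to0}$ with $\int_0^\infty dt$ by a uniform spectral gap at large $t$ (from acyclicity, as in Bismut--Cheeger) and a uniform small-$t$ asymptotic expansion (the Getzler-rescaling argument the paper defers to the proof of Proposition \ref{t3.5}), then apply Proposition \ref{t2.3} and (\ref{2.31}). You have in fact made explicit two steps the paper leaves implicit --- the supertrace cyclicity ${\rm Tr}_s[D_tND_te^{(1+r^2)D_t^2}]=-{\rm Tr}_s[ND_t^2e^{(1+r^2)D_t^2}]$ that produces the factor $2$ matching the $\tfrac12$ in (\ref{2.35}), and the Stokes argument killing the $d$-exact term against the closed forms $L\cdot\ch$ on the closed base --- and both are correct.
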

\begin{proof}  First of all, the assumption that $H^*(E, v)=0$ implies that the eigenvalues of
$ D_{\epsilon}(r) $ are uniformly bounded away from zero. Hence the
integral in (\ref{2.21}) is uniformly convergent at $t=\infty$.

We now examine the same issue at $t=0$. From Proposition \ref{axpd}, one has
\begin{multline}
 {\rm Tr}_s\left[  D_{\epsilon}(r)   Y^{\epsilon} e^{-t \left( D_{\epsilon}(r) \right)^2}\right]
  =  c_{0}(\epsilon,r) + c_{1}(\epsilon,r)\, t  + \cdots .\end{multline}
  We claim that this asymptotic expansion is in fact uniform in $\epsilon$ as $\epsilon \rightarrow 0$ and the
  coefficients converge to that of asymptotic expansion of the right hand side of (\ref{2.23}).
  This can be seen by an argument similar to that of \cite{BC}, which is carried out in detail
  later for the infinite dimensional case; see the proof of Proposition \ref{t3.5}.

Our theorem now follows from Proposition \ref{t2.3}, the equation  (\ref{2.31}) and the above discussion.
\end{proof}

\begin{Rem}\label{t2.5} By Remark \ref{t2.2}, one sees that under
the assumption of Theorem \ref{t2.4}, for each $r\in {\mathbb R}$,
when $u>0$ is large enough, ${\sqrt{-1}\over 2}\delta_u(E, v)(r)$ is
the Bismut-Freed connection form of the $r$-family of operators
$D_{{\rm sig},u}^{\mu\otimes E}(r)$ at $r$. While on the other hand,
by comparing the right hand side of (\ref{2.36}) with \cite{BL} and
\cite{MZ}, one sees that ${\cal T}_r$ here gives, up to rescaling,
the nonzero degree terms of the Bismut-Lott torsion form
(\cite{BL}). Thus, we can say that one obtains the Bismut-Lott
torsion form through the adiabatic limit of the Bismut-Freed
connection. This is the main philosophy we would like to indicate in
this paper.
\end{Rem}

\section{Sub-signature operators, adiabatic limit and the Bismut-Lott torsion
form}\label{s3} \setcounter{equation}{0}

$\quad$ In this section, we deal with the fibration case. We will
show that, for an acyclic flat complex vector bundle over a fibered
manifold, if we consider the Bismut-Freed connection form \cite{BF}
on the Quillen determinant line bundle associated to the 1-parameter
family constructed  in \cite[Section 3]{MZ}, then the Bismut-Lott
analytic torsion form \cite{BL} will show up naturally through the
adiabatic limit of this connection form. This tautologically answers
a question asked implicitly in the original article of Bismut-Lott.

\subsection{The Bismut-Lott Superconnection}\label{s3.1}

$\quad$ We first set up the fibration case as an infinite
dimensional analog of the case considered in the previous section.
Let $\pi: M\to B$ be a smooth fiber bundle with compact fiber $Z$ of
dimension $n$. We denote by $m=\dim M,\ p=\dim B$. Let $TZ$ be the
vertical tangent bundle of the fiber bundle, and let $T^* Z$ be its
dual bundle.

 Let ${TM}=T^HM\oplus TZ$ be a splitting of $TM$.
\comment{ Let $T^H M$ be a sub-bundle of $TM$ such that
\begin{eqnarray}\label{a05}
TM = T^H M \oplus TZ.
\end{eqnarray} }
Let $P^{TZ}, P^{T^HM}$ denote the projection from $TM$ to $TZ,
T^HM$. If $U\in T B$, let $U^H$ be the lift of $U$
 in $T^H M$, so that
$\pi_* U^H = U$.

 Let $F$ be a flat complex vector bundle on $M$ and let $\nabla ^F$
denote its flat connection.

Let $E= \oplus_{i=0}^n E^i$ be the smooth infinite-dimensional
 ${\mathbb Z}$-graded vector bundle over $B$ whose fiber
over $b\in B$ is $C^{\infty}(Z_b, (\Lambda ( T^* Z)\otimes
F)_{|Z_b})$. That is
\begin{align}
C^{\infty}(B, E^i)= C^{\infty}(M, \Lambda^i ( T^* Z)\otimes F).
 \end{align}

\begin{defn} \label{t3.1}
For $s\in C^{\infty}(B, E)$ and $U$ a vector field on $B$, let
$\nabla^E$ be a ${\mathbb Z}$-grading  preserving connection on $E$
defined by
\begin{align}\label{a04}
\nabla^E_U s = L_{U^H} s,
\end{align}
where the Lie differential $L_{U^H}$ acts on ${C^\infty} (B,E)=
C^{\infty}(M, \Lambda^i ( T^* Z)\otimes F)$.
 \end{defn}

If $U_1, U_2$ are vector fields on $B$, put
\begin{align}\label{a03}
T(U_1, U_2)= -P^{TZ} [U_1^H, U_2^H] \in C^\infty (M, TZ).
\end{align}
We denote by $i_T \in \Omega^2 (B, \mbox{Hom} (E^\bullet
,E^{\bullet -1} ))$   the 2-form on $B$ which, to vector fields
$U_1, U_2$ on $ B$,
 assigns the operation of interior multiplication by $T(U_1, U_2)$ on $E$.

Let $d^Z$ be the exterior differentiation along fibers. We
consider $d^Z$ to be an element of $C^{\infty}(B, \mbox{Hom}
(E^\bullet ,E^{\bullet+1} ))$. The exterior differentiation
operator $d^M$, acting on $ \Omega (M,F)=C^\infty(M, \Lambda (T^*
M)\otimes F)$, has degree $1$ and satisfies $(d^M)^2=0$. By
\cite[Proposition 3.4]{BL}, we have
\begin{align}\label{a02}
d^M = d^Z + \nabla^E + i_T.
\end{align}
So $d^M$ is a flat superconnection of total degree $1$ on $E$. We
have
\begin{align}\label{a020}
\left(d^{Z}\right)^2=0, \quad \left[\nabla^E, d^Z\right]=0.
\end{align}

Let $g^{TZ}$ be a metric on $TZ$. Let $h^F$ be a Hermitian metric
on $F$. Let $\nabla^{F*}$ be the adjoint of $\nabla^F$ with
respect to $h^F$. Let $\omega(F,h^F)$ and $\nabla^{F,e}$ be the
$1$-form on $M$ and the connection on $F$ defined as in
(\ref{2.3}), (\ref{2.7}).

 Let $o(TZ)$ be  the orientation bundle of $TZ$,
a flat  real line bundle on $M$. Let $dv_Z$ be the Riemannian
volume  form on fibers $Z$ associated  to the metric $g^{TZ}$
(Here $dv_Z$ is viewed as a section of  $\Lambda ^{\dim Z}
(T^{*}Z)\otimes o(TZ)$). Let $\left \langle \  , \  \right
\rangle_{\Lambda (T^{*}Z)\otimes F} $ be the metric on $\Lambda
(T^{*}Z)\otimes F$ induced by $g^{TZ}, h^F$.
Then $E$ acquires a Hermitian metric $h^E$ such that for $
\alpha, \alpha' \in C^\infty (B, E)$ and $b \in B$,
\begin{align}\label{a1}
\left \langle \alpha, \alpha' \right \rangle_{h^E} (b)=
\int_{Z_b} \left \langle {\alpha, \alpha' } \right \rangle
_{\Lambda (T^*Z)\otimes F} dv_{Z_b}.
\end{align}

Let $\nabla^{E*}$, $d^{Z*}$, $(d^M)^*$, $(i_T)^*$  be the formal
adjoints of $\nabla^{E}$, $d^Z$, $d^M$, $i_T$ with respect to the
scalar product $\left \langle \, ,  \, \right \rangle_{h^E} $. Set
\begin{align}\label{a2}
&D^Z = d^Z + d^{Z*}, \qquad \quad
\nabla^{E, e}= {1 \over 2} \left(\nabla ^E + \nabla^{E*}\right),\\
&\omega(E, h^E) = \nabla^{E*}- \nabla ^E. \nonumber
\end{align}

Let $N_Z$ be the number operator of $E$, i.e. $N_Z$ acts by
multiplication by $k$ on $C^\infty (M, \Lambda^k (T^{*}Z) \otimes
F)$. For $u>0$, set
\begin{align}\label{a3}
&C_u'= u^{N_Z/2} d^M u^{-N_Z/2}, \quad C_u''= u ^{-N_Z/2}\left(d^M\right)^* u^{N_Z/2},\\
&C_u = {1 \over 2} \left(C_u'+C_u''\right), \quad D_u  = {1 \over
2} \left(C_u''-C_u'\right). \nonumber
\end{align}
Then $C_u''$ is the adjoint of $C_u'$ with respect to $h^E$.
 Moreover, $C_u$ is a superconnection on $E$ and $D_u$ is an odd  element of
 $C^\infty(B, \mbox{End} (E))$, and
\begin{align}\label{a4}
C_u^2 = - D_u^2,\quad   \left[C_u, D_u\right]=0.
\end{align}

Let  $g^{TB}$ be a Riemannian metric on $TB$. Then $g^{TM}= g^{TZ}
\oplus \pi^* g^{TB}$  is a metric on $TM$.
 Let $\nabla^{TM}$, $\nabla^{TB}$
 denote the corresponding
Levi-Civita connections  on $TM, TB$. Put $\nabla^{TZ}= P^{TZ}
\nabla^{TM}$, a connection on $TZ$. As shown in \cite[Theorem
1.9]{B}, $\nabla^{TZ}$ is independent of the choice of $g^{TB}$.
Then ${^0 \nabla} = \nabla^{TZ}\oplus  \pi ^* \nabla^{TB}$ is also
a connection on $TM$. Let $S=\nabla^{TM}- {^0 \nabla}$. By
\cite[Theorem 1.9]{B}, $\left \langle S(\cdot)\cdot,\cdot\right
\rangle_{g^{TM}}$ is a tensor independent of $g^{TB}$. Moreover,
for $U_1, U_2 \in TB$, $X,Y  \in TZ$,
\begin{align}\label{0a4}
&\left\langle S\left(U_1^H\right)X, U_2^H \right\rangle_{g^{TM}} =
-\left\langle S\left(U_1^H\right)U_2^H, X \right\rangle_{g^{TM}}\\
&\hspace*{25mm}= \left\langle S(X)U_1^H, U_2^H
\right\rangle_{g^{TM}}
=\frac{1}{2} \left\langle T\left(U_1^H, U_2^H\right), X \right\rangle_{g^{TM}}, \nonumber\\
&\left\langle S(X)Y, U_1^H \right\rangle_{g^{TM}}= - \left\langle
S(X) U_1^H, Y \right\rangle_{g^{TM}} =\frac{1}{2}
\left(L_{U_1^H}g^{TZ}\right) (X,Y), \nonumber
\end{align}
and all other terms are zero.

Let $\{f_{\alpha}\}_{\alpha=1}^p$ be an orthonormal basis of $TB$,
set $\{f^{\alpha}\}_{\alpha=1}^p$ the dual basis of $T^*B$. In the
following, it's convenient to identify $f_\alpha$ with
$f_\alpha^H$. Let $\{e_i\}_{i=1}^n$ be an orthonormal basis of
$(TZ, g^{TZ})$. We define a horizontal $1$-form $k$ on $M$ by
\begin{align}\label{a5}
k (f_{\alpha})=- \sum_i \left \langle S(e_i)e_i, f_{\alpha}\right
\rangle.
\end{align}
Set
\begin{align}\label{a6}
c(T)= {1 \over 2} \sum_{\alpha,\beta} f^\alpha\wedge f^{\beta}
c  \left(T\left(f_{\alpha}, f_{\beta}\right)   \right),\\
\widehat{c}(T)= {1 \over 2} \sum_{\alpha,\beta} f^\alpha\wedge
f^{\beta} \widehat{c} \left(T\left(f_{\alpha}, f_{\beta}\right)
\right).\nonumber
\end{align}

Let $\nabla^{\Lambda (T^* Z)}$ be the connection on $\Lambda (T^*
Z)$ induced by $\nabla ^{TZ}$.  Let $\nabla^{TZ\otimes F, e} $ be
the connection on $\Lambda (T^* Z)\otimes F$ induced by
$\nabla^{\Lambda (T^* Z)}$, $\nabla^{F,e}$. Then  by \cite[(3.36),
(3.37), (3.42)]{BL},
\begin{align}\label{a7}
&D^Z= \sum_j  c(e_j)\nabla^{TZ\otimes F, e} _{e_j} - {1 \over
2}\sum_j
\widehat{c}(e_j) \omega\left(F, h^F)(e_j\right),\\
&d^{Z*}- d^Z = -\sum_j \widehat{c}(e_j)\nabla^{TZ\otimes F, e}
_{e_j}
 + {1 \over 2}\sum_j c(e_j) \omega\left(F, h^F\right)(e_j),\nonumber\\
&\nabla^{E, e} =\sum_{\alpha} f^{\alpha} \left(\nabla^{TZ\otimes
F, e}_{f_{\alpha}} + {1 \over 2}
k\left(f_{\alpha}\right)\right ),\nonumber\\
&\omega\left(E, h^E\right) = \sum_{\alpha} f^{\alpha}
\left(\sum_{i,j}\left \langle S(e_i)e_j, f_{\alpha}\right \rangle
c(e_i) \widehat{c}(e_j) + \omega(F,
h^F)(f_{\alpha})\right).\nonumber
\end{align}
By \cite[Proposition 3.9]{BL}, one has
\begin{align}\label{a8}
&C_u= {\sqrt{u}  \over 2} D^Z + \nabla ^{E,e} - {1 \over 2\sqrt{u}} c(T),\\
&D_u = {\sqrt{u}  \over 2} \left(d^{Z*}- d^Z\right) + {1 \over
2}\omega\left(E, h^E\right) -  {1 \over 2\sqrt{u}}
\widehat{c}(T).\nonumber
\end{align}

\subsection{Deformed sub-signature operators on a fibered manifold}
\label{s3.2}

$\quad$ We assume now that  $TB$ is oriented.

Let $(\mu, h^\mu)$ be a Hermitian complex vector bundle over $B$
carrying a Hermitian connection $\nabla^\mu$.

Let $N_B, N_M$ be the number operators on $\Lambda (T^*B), \Lambda
(T^*M)$, i.e. they act as multiplication by $k$ on $\Lambda
^k(T^*B), \Lambda  ^k(T^*M)$ respectively. Then $N_M=N_B+N_Z$.

Let $\nabla^{\Lambda(T^*M)}$ be the connection on $\Lambda(T^*M)$
canonically induced from $\nabla^{TM}$. Let $\nabla
^{\Lambda(T^*M)\otimes \pi^*\mu\otimes F}$ (resp. $\nabla
^{\Lambda(T^*M)\otimes \pi^*\mu\otimes F,e}$)  be the tensor
product connection on $\Lambda(T^*M)\otimes  \pi^*\mu\otimes F$
induced by $\nabla^{\Lambda(T^*M)}$, $\pi^*\nabla^\mu$ and
$\nabla^{F}$ (resp. $\nabla^{F,e}$).

Let $\{e_a\}_{a=1}^{m}$ be an
 orthonormal basis of $TM$, and its dual basis $\{e ^a\}_{a=1}^{m}$.
 Let
$\{f_\alpha\}_{\alpha=1}^p$  be an oriented orthonormal basis of
$TB$. Set
\begin{align} \label{a17}
&\tau (TB)= (\sqrt{-1})^{p(p+1)\over
2} c\left(f_1^H\right)\cdots c\left(f_p^H\right),  \\
&\tau = (-1)^{N_Z}\tau (TB). \nonumber
\end{align}
Then the operators $  \tau (TB), \tau$ act naturally on $\Lambda
(T^*M)$, and
\begin{align}\label{a18}
\tau (TB)^2=\tau ^2 =1.
\end{align}

Let $d^{\nabla^\mu} : \Omega ^a (M, \pi^*\mu\otimes F)\to \Omega
^{a+1} (M, \pi^*\mu\otimes F)$  be the unique extension of $\nabla
^\mu, \nabla ^F$ which satisfies the Leibniz rule. Let
$d^{\nabla^\mu *} $ be the adjoint of $d^{\nabla^\mu}$ with
respect to the scalar product $\left \langle \ ,\ \right
\rangle_{\Omega (M, \pi^*\mu\otimes F)}$ on $\Omega (M,
\pi^*\mu\otimes F)$ induced by $g^{TM}, h^\mu, h^F$ as in
(\ref{a1}).
 As in \cite[(4.26), (4.27)]{BZ}, we have
\begin{align}\label{a19}
&d^{\nabla^\mu} = \sum_a e^a \wedge \nabla ^{\Lambda(T^*M)\otimes \pi^*\mu\otimes F}_{e_a},\\
&d^{\nabla^\mu *} =- \sum_a i_{e_a} \wedge \left(\nabla
^{\Lambda(T^*M)\otimes \pi^*\mu\otimes F}_{e_a} + \omega \left(F,
h^F\right)(e_a)\right).\nonumber
\end{align}

Following \cite{Z}, let $\widetilde{\nabla}^{\Lambda(T^*M)}$ be
the Hermitian connection on $\Lambda(T^*M)$ defined by (cf.
\cite[(1.21)]{Z})
\begin{align}\label{a22}
\widetilde{\nabla}^{\Lambda(T^*M)}_X=\nabla^{\Lambda(T^*M)}_X-{1\over
2}\sum_{\alpha=1}^{p}\widehat{c}\left(P^{TZ}S(X)f_\alpha\right)\widehat{c}\left(f_\alpha\right),\
\ \ X\in TM.
\end{align}

 Let $\widetilde{\nabla}^{e}$ be the tensor product connection on
$\Lambda(T^*M)\otimes  \pi^*\mu\otimes F$ induced by
$\widetilde{\nabla}^{\Lambda(T^*M)}$, $\pi^*\nabla^\mu$ and
$\nabla^{F,e}$.  Following \cite[(3.23)]{MZ}, for any $r\in
{\mathbb R}$, set
\begin{align}\label{a23}
 &D^{\pi^*\mu\otimes F} = \sum_{a=1}^{m}c(e_a)
\widetilde{\nabla}^{e}_{e_a}
-\frac{1}{2} \sum_{i=1}^{n} \widehat{c}(e_i) \omega\left(F, h^F\right)(e_i),\\
 &\widehat{D} ^{\pi^*\mu\otimes F} =- \sum_{i=1}^{n}\widehat{c}(e_i)
\widetilde{\nabla}^{e}_{e_i}
+ \frac{1}{2} \sum_{a=1}^{m} c(e_a) \omega\left(F, h^F\right)(e_a)\nonumber\\
&\hspace*{30mm}- \frac{1}{4}\sum_{\alpha,\beta=1}^p
\widehat{c}\left(T\left(f_\alpha, f_\beta\right)\right)
\widehat{c}(f_\alpha)\widehat{c}\left(f_ \beta\right) ,\nonumber\\
&D^{\pi^*\mu\otimes F}(r) = D^{\pi^*\mu\otimes F} + \sqrt{-1} r
\widehat{D} ^{\pi^*\mu\otimes F}. \nonumber
\end{align}

From (\ref{a23}), the operators $D^{\pi^*\mu\otimes F}$,
$D^{\pi^*\mu\otimes F}(r)$ are formally self-adjoint first order
elliptic operators,  and $\widehat{D}^{\pi^*\mu\otimes F}$ is a
skew-adjoint first order differential operator. Moreover, the
operator $D^{\pi^*\mu\otimes F}$ is locally of Dirac type.

 By \cite[(3.20) and Proposition 3.4]{MZ}, one has
 \begin{align}\label{a20}
& D^{\pi^*\mu\otimes F}  =\frac{1}{2} \left[
\left(d^{\nabla^\mu}+d^{\nabla^\mu *}\right)
+ (-1)^{p+1} \tau\left(d^{\nabla^\mu}+d^{\nabla^\mu *}\right) \tau \right],\\
&\widehat{D}^{\pi^*\mu\otimes F}  =\frac{1}{2}\left[
\left(d^{\nabla^\mu *}-d^{\nabla^\mu}\right) + (-1)^{p+1} \tau
\left(d^{\nabla^\mu *}-d^{\nabla^\mu}\right) \tau
\right],\nonumber
\end{align}
which partly explains the motivation of introducing these
operators (compare with (\ref{2.22})).

 By (\ref{a17}), (\ref{a18}) and (\ref{a20}), one
verifies (cf. \cite[(3.28)]{MZ})
\begin{align}\label{a28}
&\tau D^{\pi^*\mu\otimes F} = (-1)^{p+1} D^{\pi^*\mu\otimes
F}\tau,\quad &\tau \widehat{D}^{\pi^*\mu\otimes F} = (-1)^{p+1}
\widehat{D}^{\pi^*\mu\otimes F}\tau.
\end{align}

\begin{Rem} \label{w1}It is important to note that by (\ref{a28}), when $p=\dim B$ is
even, both $ D^{\pi^*\mu\otimes F}$ and
$\widehat{D}^{\pi^*\mu\otimes F}$ anti-commute with $\tau$.
\end{Rem}

\begin{Rem} When $\mu=F={\bf C}$ and $p=\dim B$ is even, $ D^{\pi^*\mu\otimes
F}$ has been constructed in \cite{Z} and \cite{Z1}, where it is
called the sub-signature operator.
\end{Rem}

\subsection{Bismut-Freed connection of the deformed family}\label{s3.3}

$\quad$ We assume that $p=\dim B$ is even. Moreover, we make the following
technical assumption.

$\ $

\noindent {\bf Technical assumption}. The flat vector bundle $F$
over $M$ is fiberwise acyclic, that is $H^*(Z_b,F|_{Z_b})=\{0\}$
on each fiber $Z_b$, $b\in B$.

$\ $

For any $\varepsilon>0$, we change $g^{TB}$ to ${1\over
\varepsilon}g^{TB}$ and do everything again for
$g_\varepsilon^{TM}=g^{TZ}\oplus {1\over \varepsilon}\pi^*g^{TB}$.
We will use a subscript $\varepsilon$ to denote the resulting
objects.

For any $r\in{\mathbb R}$, one verifies directly that the
coefficients of ${1\over \sqrt{\varepsilon}}$ in ${1\over
\sqrt{\varepsilon}}D^{\pi^*\mu\otimes F}_\varepsilon(r)$ is given by
$ d^Z+d^{Z*}-\sqrt{-1}r\left(d^Z-d^{Z*}\right). $ Since
\begin{align}\label{p2}
\left(d^Z+d^{Z*}-\sqrt{-1}r\left(d^Z-d^{Z*}\right)\right)^2=\left(1+r^2\right)\left(d^Z+d^{Z*}\right)^2,
\end{align}
by proceeding as in \cite{BC}, one sees that when $\varepsilon>0$ is
small enough, $D^{\pi^*\mu\otimes F}_\varepsilon(r)$ is invertible
near $r$. In fact, the eigenvalues of $D^{\pi^*\mu\otimes
F}_\varepsilon(r)$ are uniformly bounded away from zero.

Consider now $D^{\pi^*\mu\otimes F}_\varepsilon(r)$ as an
$r$-family which anti-commutes with the ${\mathbb Z}_2$-grading
defined by $\tau$.

Then one can construct the Quillen determinant line bundle over $r$
and the associated Bismut-Freed connection on it (cf. \cite{BF}).
Moreover,  by the above discussion and by \cite[3.8]{BF}, we know
that when $\varepsilon>0$ is small enough, the imaginary part of the
Bismut-Freed connection form is given by
\begin{eqnarray*}
\ \ \ \ \ \ \ \ \ \ &  & {1\over 2\sqrt{-1}}\ {\rm F.P.}
\int_0^{+\infty}{\rm Tr}_s\left[D^{\pi^*\mu\otimes
F}_\varepsilon(r){\partial D^{\pi^*\mu\otimes
F}_\varepsilon(r)\over\partial r}e^{-t \left(D^{\pi^*\mu\otimes
F}_\varepsilon(r)\right)^2}\right] dt \nonumber \\
 & = & {1\over 2 }\ {\rm F.P.} \int_0^{+\infty}{\rm Tr}_s\left[D^{\pi^*\mu\otimes
F}_\varepsilon(r){ \widehat{ D}^{\pi^*\mu\otimes F}_\varepsilon
 }e^{-t \left(D^{\pi^*\mu\otimes
F}_\varepsilon(r)\right)^2}\right] dt \nonumber \\
 & = & -{1\over 2 }\ {\rm F.P.} \int_0^{+\infty}{\rm Tr}_s\left[{ \widehat{
D}^{\pi^*\mu\otimes F}_\varepsilon
 }D^{\pi^*\mu\otimes
F}_\varepsilon(r)e^{-t \left(D^{\pi^*\mu\otimes
F}_\varepsilon(r)\right)^2}\right]dt  ,
\end{eqnarray*}
where the supertrace is with respect to $\tau$ and 'F.P.' means
taking the finite part of the (divergent) integral. As usual we use
the zeta function regularization. Thus, we define
\begin{align} \label{ps3}
\delta_\varepsilon(F, r)( s)= -\frac{1}{2\Gamma(s)}
\int_0^{+\infty} t^s {\rm Tr}_s\left[{ \widehat{ D}^{\pi^*\mu\otimes
F}_\varepsilon
 }D^{\pi^*\mu\otimes
F}_\varepsilon(r)e^{-t \left(D^{\pi^*\mu\otimes
F}_\varepsilon(r)\right)^2}\right]dt.
\end{align}
\newline

{\em Remark}. Note that we have built the factor $\frac{1}{2}$ into
the definition (unlike the finite dimensional case).
\newline

In the next subsection we will study the asymptotic expansions of
the integrand in (\ref{ps3}) which implies that the integral,
convergent for $\Re\, s$ sufficiently large, has meromorphic
continuation to the whole complex plane with $s=0$ a regular point.
Therefore we define our invariant by
\begin{align} \label{p3}
\delta_\varepsilon(F)(r)=\delta_\varepsilon(F, r)'( 0).
\end{align}

 Also in the next section we
will compute the adiabatic limit of $\delta_\varepsilon(F)(r)$ as
$\varepsilon\rightarrow 0$.

We remark that the definition of $\delta_\varepsilon(F)( r)$ does
not make use of the technical assumption $H^*(Z_b,F|_{Z_b})=\{0\}$.

\subsection{The adiabatic limit and the torsion form}\label{s3.4}

$\quad$ We begin with a lemma.

\begin{lemma}  $\widehat{D}^{\pi^*\mu\otimes F}$ is the quantization of $D_4$; namely, it is
obtained by replacing the horizontal differential forms in $D_4$ by
the corresponding Clifford multiplications. Similarly,
$D^{\pi^*\mu\otimes F}$ is the quantization of $C_4$.
\end{lemma}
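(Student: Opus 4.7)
The plan is to carry out both identifications by direct comparison of the explicit formulas available for $C_u$, $D_u$ in (\ref{a8}) at $u=4$ and for $D^{\pi^*\mu\otimes F}$, $\widehat{D}^{\pi^*\mu\otimes F}$ in (\ref{a23}). Setting $u=4$ is exactly the value at which the rescaling coefficients $\tfrac{\sqrt{u}}{2}=1$ and $\tfrac{1}{2\sqrt{u}}=\tfrac{1}{4}$ match the coefficients appearing in the sub-signature operators, so one is reduced to checking the match term by term.

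First I would spell out the quantization map precisely. The superconnection $C_u$ takes values in $\Lambda(T^*B)\widehat{\otimes}\mathrm{End}(E)$; quantization replaces a horizontal $1$-form $f^\alpha$ by the Clifford action $c(f_\alpha^H)$ on $\Lambda(T^*M)$. Applied to $C_4=D^Z+\nabla^{E,e}-\tfrac{1}{4}c(T)$ and using (\ref{a7}) and (\ref{a6}), the quantization $Q(C_4)$ becomes
\begin{align*}
Q(C_4)
&= D^Z
+\sum_{\alpha}c(f_\alpha)\Bigl(\nabla^{TZ\otimes F,e}_{f_\alpha}+\tfrac{1}{2}k(f_\alpha)\Bigr)\\
&\quad-\tfrac{1}{8}\sum_{\alpha,\beta}c(f_\alpha)c(f_\beta)\,c\bigl(T(f_\alpha,f_\beta)\bigr).
\end{align*}
Analogously $Q(D_4)$ is built from $d^{Z*}-d^Z$ together with $\tfrac{1}{2}\omega(E,h^E)$ and $-\tfrac{1}{4}\widehat{c}(T)$ and expanded via the third and fourth lines of (\ref{a7}).

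Next I would decompose $D^{\pi^*\mu\otimes F}$ and $\widehat{D}^{\pi^*\mu\otimes F}$ along the splitting $TM=T^HM\oplus TZ$, writing $\sum_a c(e_a)\widetilde{\nabla}^e_{e_a}=\sum_i c(e_i)\widetilde{\nabla}^e_{e_i}+\sum_\alpha c(f_\alpha)\widetilde{\nabla}^e_{f_\alpha}$, and similarly for the $\widehat{c}(e_i)\widetilde{\nabla}^e_{e_i}$ piece. The purely vertical components immediately reproduce $D^Z$ and $d^{Z*}-d^Z$ by the first two lines of (\ref{a7}), up to contributions coming from the correction term $-\tfrac{1}{2}\sum_\alpha \widehat{c}(P^{TZ}S(X)f_\alpha)\widehat{c}(f_\alpha)$ of (\ref{a22}). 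These correction pieces must be shown to recombine, via (\ref{0a4}) and the skew-symmetry $\langle S(X)Y,Z\rangle=-\langle S(X)Z,Y\rangle$, into precisely the $k(f_\alpha)$, $c(T)$ and $\widehat{c}(T)$ quantities present in $Q(C_4)$ and $Q(D_4)$.

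The crux of the argument, and the main obstacle, is in matching these mixed horizontal/vertical pieces: one must show that the algebraic combinations of $c$'s and $\widehat{c}$'s built from $P^{TZ}S(e_i)f_\alpha$, $P^{TZ}S(f_\alpha)f_\beta$ and $P^{TZ}S(f_\alpha)e_i$ reassemble exactly into the quantization of $\nabla^{E,e}$ (respectively $\tfrac{1}{2}\omega(E,h^E)$) plus the $-\tfrac{1}{4}$ multiple of the quantization of $c(T)$ (respectively $\widehat{c}(T)$). The identities (\ref{0a4}) are tailor-made for this: the identity
$\langle S(U_1^H)X,U_2^H\rangle=\tfrac{1}{2}\langle T(U_1^H,U_2^H),X\rangle$
produces the $c(T)$ and $\widehat{c}(T)$ contributions, while $\langle S(X)Y,U^H\rangle=\tfrac{1}{2}(L_{U^H}g^{TZ})(X,Y)$ together with $k(f_\alpha)=-\sum_i\langle S(e_i)e_i,f_\alpha\rangle$ accounts for the $\tfrac{1}{2}k(f_\alpha)$ term and the terms mixing $c(e_i)\widehat{c}(e_j)$ appearing in $\omega(E,h^E)$. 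A careful bookkeeping of the Clifford relations (\ref{2.10}) and of the factors $\tfrac{1}{2}$, $\tfrac{1}{4}$, $\tfrac{1}{8}$ then yields equality $Q(C_4)=D^{\pi^*\mu\otimes F}$ and $Q(D_4)=\widehat{D}^{\pi^*\mu\otimes F}$, completing the lemma.
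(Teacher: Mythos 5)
Your plan follows essentially the same route as the paper: decompose $\widetilde{\nabla}^{\Lambda(T^*M)}$ via $\nabla^{TM}={}^0\nabla+S$ and (\ref{a22}), split into vertical and horizontal pieces, and use the structure identities (\ref{0a4}) together with (\ref{a6})--(\ref{a8}) to match terms against $C_4$ and $D_4$. The paper carries out exactly this Clifford bookkeeping explicitly for $\widehat{D}^{\pi^*\mu\otimes F}$ (noting in particular that the mixed term $\widehat{c}(T(f_\alpha,f_\beta))\bigl(c(f_\alpha)\widehat{c}(f_\beta)-\widehat{c}(f_\alpha)c(f_\beta)\bigr)$ cancels by antisymmetry) and treats the other case ``similarly,'' so the only thing your write-up defers is that explicit term-by-term verification.
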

\begin{proof} By (\ref{a23}),
\[ \widehat{D} ^{\pi^*\mu\otimes F} =- \sum_{i=1}^{n}\widehat{c}(e_i)
\widetilde{\nabla}^{e}_{e_i} + \frac{1}{2} \sum_{a=1}^{m} c(e_a)
\omega\left(F, h^F\right)(e_a)- \frac{1}{4}\sum_{\alpha,\beta=1}^p
\widehat{c}\left(T\left(f_\alpha, f_\beta\right)\right)
\widehat{c}(f_\alpha)\widehat{c}\left(f_ \beta\right),\] where the
connection $\widetilde{\nabla}^{\Lambda(T^*M)}$ is defined by
(\ref{a22}). Thus, we now look at the connection in a bit more
detail. Since $\nabla^{TM}= \ ^0\nabla +S$ and $^0\nabla =
\nabla^{TZ} \oplus \pi^*\nabla^{TB}$, we find (for simplicity, we
denote $S_{ij\alpha}=\langle S(e_i)e_j, \ f_\alpha \rangle$ and so
on)
\[  \nabla^{\Lambda(T^*M)}_{e_i} =
\nabla^{\Lambda(T^*Z)}_{e_i} -\frac{1}{4} \left[ S_{ij\alpha} \left(
\widehat{c}(e_j)\widehat{c}(f_\alpha) - c(e_j) c(f_\alpha) + c(e_j)
\widehat{c}(f_\alpha) -  \widehat{c}(e_j)c(f_\alpha) \right) \right.
\] \[ \hspace{.05in}   +  S_{i\alpha j}
(\widehat{c}(f_\alpha) \widehat{c}(e_j) - c(f_\alpha) c(e_j)  +
c(f_\alpha) \widehat{c}(e_j)- \widehat{c}(f_\alpha) c(e_j) ) \] \[
\hspace{.2in} \left. +  S_{i\alpha\beta} (\widehat{c}(f_\alpha)
\widehat{c}(f_\beta) - c(f_\alpha) c(f_\beta) +  c(f_\alpha)
\widehat{c}(f_\beta)- \widehat{c}(f_\alpha) c(f_\beta)) \right]
\]
\[ = \nabla^{\Lambda(T^*Z)}_{e_i} -\frac{1}{2} S_{i\alpha j}
\left[  \widehat{c}(f_\alpha) \widehat{c}(e_j) - c(f_\alpha) c(e_j)
\right] \hspace{1.1in}
\]
\[ \hspace{.3in} -\frac{1}{4}S_{i\alpha\beta} \left[ \widehat{c}(f_\alpha)
\widehat{c}(f_\beta) - c(f_\alpha) c(f_\beta) +  c(f_\alpha)
\widehat{c}(f_\beta)- \widehat{c}(f_\alpha) c(f_\beta)\right]. \]
Hence
\[ \widetilde{\nabla}^{\Lambda(T^*M)}_{e_i}=
\nabla^{\Lambda(T^*Z)}_{e_i} -\frac{1}{2}  S_{ij\alpha} c(f_\alpha)
c(e_j) \hspace{2.7in}
\]
\[ \hspace{.3in} -\frac{1}{4}S_{i\alpha\beta} \left[ \widehat{c}(f_\alpha)
\widehat{c}(f_\beta) - c(f_\alpha) c(f_\beta) +  c(f_\alpha)
\widehat{c}(f_\beta)- \widehat{c}(f_\alpha) c(f_\beta)\right]. \]
Therefore,
\[ \widehat{c}(e_i) \widetilde{\nabla}^{\Lambda(T^*M)}_{e_i}=
\widehat{c}(e_i) \nabla^{\Lambda(T^*Z)}_{e_i}  \hspace{3.5in}
\] \[ \hspace{1in} -\frac{1}{4} S_{i\alpha\beta} \widehat{c}(e_i) \left[ \widehat{c}(f_\alpha)
\widehat{c}(f_\beta) - c(f_\alpha) c(f_\beta) +  c(f_\alpha)
\widehat{c}(f_\beta)- \widehat{c}(f_\alpha) c(f_\beta) \right].
\]
And so
\[ \widehat{c}(e_i) \widetilde{\nabla}^{e}_{e_i} = \widehat{c}(e_i) \nabla^{TZ\otimes F, e}_{e_i} -\frac{1}{8}
\widehat{c}(T(f_\alpha, f_\beta))\widehat{c}(f_\alpha)
 \widehat{c}(f_\beta) + \frac{1}{8}
\widehat{c}(T(f_\alpha, f_\beta)) c(f_\alpha) c(f_\beta) \]
\[
-\frac{1}{8} \widehat{c}(T(f_\alpha, f_\beta)) [ c(f_\alpha)
\widehat{c}(f_\beta)- \widehat{c}(f_\alpha) c(f_\beta) ].
\hspace{1in}
\]
The last term here, $-\frac{1}{8} \widehat{c}(T(f_\alpha, f_\beta))
[ c(f_\alpha) \widehat{c}(f_\beta)- \widehat{c}(f_\alpha)
c(f_\beta)]$, vanishes by the antisymmetry. Using the formula above
together with (\ref{a7}), (\ref{a8}), (\ref{a23}), we prove our
lemma. (The other case is dealt with similarly.)
\end{proof}
\begin{prop}\label{t3.4} We have
\begin{multline}\label{3.44}
 \lim_{\epsilon\rightarrow 0} {\rm Tr}_s\left[{ \widehat{
D}^{\pi^*\mu\otimes F}_\varepsilon }D^{\pi^*\mu\otimes
F}_\varepsilon(r)e^{-t \left(D^{\pi^*\mu\otimes
F}_\varepsilon(r)\right)^2}\right]
  =  - 
  \int_B L\left(TB, \nabla^{TB}\right)\ch\left(\mu,
  \nabla^{\mu}\right)\\ \cdot
 \varphi\,{\rm Tr}_s\left[t^{-\frac{1}{2}}D_{4t} \frac{\partial}{\partial \sqrt{t}}
 \left( C_{4t} + \sqrt{-1} r D_{4t} \right)
   e^{- (1+r^2)C_{4t}^2} \right]  .\end{multline}
\end{prop}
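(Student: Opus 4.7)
\textbf{Proof plan for Proposition \ref{t3.4}.} The strategy is to adapt the argument of Proposition \ref{t2.3} to the fibration setting, incorporating Bismut--Cheeger-style local index theory in the horizontal (base) directions while leaving the vertical (fiber) directions intact. First I would recast the supertrace using an auxiliary Grassmann variable $z$, writing
\begin{align*}
{\rm Tr}_s\left[\widehat{D}^{\pi^*\mu\otimes F}_\varepsilon D^{\pi^*\mu\otimes F}_\varepsilon(r)e^{-t(D^{\pi^*\mu\otimes F}_\varepsilon(r))^2}\right]
= -\,{\rm Tr}_{s,z}\!\left[\widehat{D}^{\pi^*\mu\otimes F}_\varepsilon\, t^{-1/2}\, e^{-t(D^{\pi^*\mu\otimes F}_\varepsilon(r))^2 + z\sqrt{t}\,D^{\pi^*\mu\otimes F}_\varepsilon(r)}\right],
\end{align*}
exactly as in \cite{BF}, \cite{BC} and as was done in (\ref{2.24}). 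Then I would apply a Lichnerowicz-type formula for $(D^{\pi^*\mu\otimes F}_\varepsilon(r))^2$, keeping track of the explicit splitting of the operator induced by the adiabatic scaling $g^{TM}_\varepsilon=g^{TZ}\oplus\varepsilon^{-1}\pi^*g^{TB}$: the horizontal Clifford multiplications $c(f_\alpha)$ and horizontal derivatives $\nabla_{f_\alpha}$ carry factors of $\sqrt{\varepsilon}$, while the purely vertical operator $d^Z+d^{Z*}$ together with the deformation $-\sqrt{-1}r(d^Z-d^{Z*})$ is of order $\varepsilon^0$ and supplies the lowest-order term studied in (\ref{p2}).

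Next I would localize near a point $(b_0,x_0)\in M$ by fixing a normal coordinate system $\{y_\alpha\}$ on $B$ around $b_0$ and trivializing all bundles in the horizontal direction. The key step is to conjugate the heat operator by $\exp\!\bigl(\tfrac{z\sum_\alpha y_\alpha c(f_\alpha)}{2\sqrt{t\varepsilon}}\bigr)$ and then apply a Getzler-type rescaling $G_{\sqrt{t\varepsilon}}$ only in the horizontal variables, under which $c(f_\alpha)\mapsto (t\varepsilon)^{-1/2}f^\alpha\wedge - (t\varepsilon)^{1/2}i_{f_\alpha}$ while $\widehat{c}(f_\alpha)$ is left unchanged. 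This is exactly the Bismut/Bismut--Cheeger rescaling used to prove the family index theorem, and it keeps the vertical Clifford algebra on $\Lambda(T^*Z)\otimes F$ intact. Under this procedure the horizontal part of the Lichnerowicz operator converges, in the limit $\varepsilon\to 0$, to the harmonic oscillator
\begin{align*}
-\Bigl(\partial_{y_\alpha}+\tfrac{1}{4}R^{TB}_{\alpha\beta}y_\beta\Bigr)^{2}+\tfrac{1}{4}R^{TB}_{\alpha\beta\gamma\delta}\widehat{c}(f_\gamma)\widehat{c}(f_\delta)+R^\mu,
\end{align*}
whose fiberwise trace produces (via Mehler's formula combined with the Berezin integral $\varphi$) the factor $\int_B L(TB,\nabla^{TB})\,\ch(\mu,\nabla^\mu)$.

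The heart of the proof is identifying the remaining vertical operator. Here the preceding lemma is decisive: it asserts that $D^{\pi^*\mu\otimes F}$ is the quantization of $C_4$ and $\widehat{D}^{\pi^*\mu\otimes F}$ that of $D_4$, so that under the horizontal Getzler rescaling (which converts horizontal Clifford variables into horizontal differential forms) the operator $D^{\pi^*\mu\otimes F}_\varepsilon+\sqrt{-1}r\widehat{D}^{\pi^*\mu\otimes F}_\varepsilon$ in the heat kernel becomes, after the standard $t\leftrightarrow u$ rescaling, the Bismut--Lott superconnection combination $C_{4t}+\sqrt{-1}r D_{4t}$, while the prefactor $\widehat{D}^{\pi^*\mu\otimes F}_\varepsilon$ becomes $t^{-1/2}D_{4t}$ and the Grassmann insertion produces $\partial_{\sqrt{t}}(C_{4t}+\sqrt{-1}rD_{4t})$. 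Assembling everything and collecting the $\varphi$-normalization gives exactly the right-hand side of (\ref{3.44}).

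The main obstacle is the bookkeeping of scaling factors: one must check that the factors of $\varepsilon$ absorbed by horizontal Clifford variables, the factor $\sqrt{u}$ in the Bismut--Lott superconnection $C_u=\tfrac{\sqrt{u}}{2}D^Z+\cdots$, and the explicit factor of $4$ coming from the quantization identity $D\leftrightarrow C_4$, $\widehat{D}\leftrightarrow D_4$ all combine to give $C_{4t}$ and $D_{4t}$ rather than $C_t$ and $D_t$ (as in the finite-dimensional statement of Proposition \ref{t2.3}). A secondary but nontrivial issue is justifying that the pointwise limit of heat supertraces converges uniformly on $[0,T]\times\{|r|\le R\}$, so that the limit may be interchanged with the $t$-integration later; this uniformity is precisely the content of the forthcoming Proposition \ref{t3.5}, which is why the authors defer its proof to there.
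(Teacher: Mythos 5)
Your proposal follows essentially the same route as the paper's own proof: the Grassmann-variable rewriting, the Lichnerowicz formulas for $(D^{\pi^*\mu\otimes F})^2$, $(\widehat{D}^{\pi^*\mu\otimes F})^2$ and $[D^{\pi^*\mu\otimes F},\widehat{D}^{\pi^*\mu\otimes F}]$, the conjugation by $\exp\bigl(\tfrac{z\sum_\alpha y_\alpha c(f_\alpha)}{2\sqrt{t\varepsilon}}\bigr)$ followed by the horizontal Getzler rescaling $G_{\sqrt{t\varepsilon}}$, the identification of the vertical limits via the quantization lemma, and the Mehler/Berezin evaluation yielding $\int_B L(TB,\nabla^{TB})\ch(\mu,\nabla^\mu)$. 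The two concerns you flag (scaling bookkeeping producing $C_{4t},D_{4t}$, and uniformity deferred to Proposition \ref{t3.5}) are exactly how the paper handles them.
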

\begin{proof}
 Again, we introduce an auxiliary Grassmann variable $z$ and rewrite
\begin{align}\label{3.45} {\rm Tr}_s \left[ \widehat{
D}^{\pi^*\mu\otimes F}_\varepsilon D^{\pi^*\mu\otimes
F}_\varepsilon(r)e^{-t \left(D^{\pi^*\mu\otimes
F}_\varepsilon(r)\right)^2}\right] = \hspace{2in} \\
\hspace{1in}  -{\rm Tr}_{s, z}\left[ t^{-\frac{1}{2}} \widehat{
D}^{\pi^*\mu\otimes F}_\varepsilon e^{-t \left(D^{\pi^*\mu\otimes
F}_\varepsilon(r)\right)^2+ z\sqrt{t} D^{\pi^*\mu\otimes
F}_\varepsilon(r)} \right] .\nonumber
\end{align}

The following Lichnerowicz formula was proved in \cite[Theorem
1.1]{Z}.
\begin{multline}\label{b7}
\left( D^{\pi^*\mu\otimes F}\right)^2 =-\wi{\Delta}^{e} +\frac{K}{4}
+ \frac{1}{2}\sum_{a,b=1}^{m}c(e_a)c(e_b)
 (\widehat{R}^e + \pi^* R^\mu) (e_a,e_b) \\
 +{1\over 4}\sum_{i=1}^{n}
\left(\omega\left(F,h^F\right)(e_i)\right)^2 +{1\over
8}\sum_{i,j=1}^{n}\widehat{c}(e_i)\widehat{c}(e_j)
\left(\omega\left(F,h^F\right)\right)^2 (e_i,e_j)\\
-\frac{1}{2} \sum_{a=1}^{m}c(e_a) \Big [\sum_{i=1}^{n}
\widehat{c}(e_i)\nabla ^{TM\otimes F,e}_{e_a}\omega \left(F,h^F\right)(e_i)\\
    + \sum_{\alpha=1}^{p} \widehat{c}(f_\alpha)
 \omega \left(F,h^F\right)(P^{TZ} S(e_a) f_\alpha)\Big ] .
 \end{multline}

Similarly, the following formulas are shown in \cite[Proposition
3.6]{MZ}.
\begin{multline}\label{b8}
 \left(\widehat{D}^{\pi^*\mu\otimes F}\right)^2 =
\sum_{i=1}^n \left( (\wi{\nabla}^{e}_{e_i} )^2 -\wi{\nabla}
^{e}_{\nabla ^{TM}_{e_i}e_i}\right)   + \frac{1}{2}
\sum_{i,j=1}^{n}\widehat{c}(e_i)\widehat{c}(e_j)(\wi{\nabla}^{e})^2
(e_i,e_j)\\
+ \frac{1}{4}\sum_{i=1}^n\widehat{c}(e_i) \Big
[\wi{\nabla}^{e}_{e_i},  \sum_{\alpha,\beta=1}^p
\widehat{c}(T(f_\alpha, f_\beta))
\widehat{c}(f_\alpha) \widehat{c}(f_ \beta)\Big]\\
+\frac{1}{2}\sum_{\alpha,\beta=1}^p  \widehat{c}(f_\alpha)
\widehat{c}(f_ \beta) \wi{\nabla}^{e}_{T(f_\alpha, f_\beta)} -
\frac{1}{2}\sum_{i=1}^n\sum_{a=1}^m\widehat{c}(e_i)c(e_a)(\nabla
^{TM\otimes F,e}_{e_i}
\omega \left(F,h^F\right))(e_a)\\
-{1\over
4}\sum_{a=1}^{m}\left(\omega\left(F,h^F\right)(e_a)\right)^2
+{1\over 8}\sum_{a,b=1}^{m}c(e_a)c(e_b)
\left(\omega\left(F,h^F\right)\right)^2 (e_a,e_b)\\
+\frac{1}{16} \Big(\sum_{\alpha,\beta=1}^p \widehat{c}(T(f_\alpha,
f_\beta))\widehat{c}(f_\alpha) \widehat{c}(f_ \beta)\Big)^2,
 \end{multline}
\begin{multline}\label{b9}
[D^{\pi^*\mu\otimes F}, \widehat{D}^{\pi^*\mu\otimes F}]=
-\sum_{a=1}^m\sum_{i=1}^n c(e_a)\widehat{c}(e_i)
\left(\widehat{R}^e+\pi^* R^\mu
+\frac{1}{4}\omega \left(F,h^F\right)^2\right) (e_a,e_i)\\
-\sum_{\alpha=1}^{p}\omega
\left(F,h^F\right)(f_\alpha)\wi{\nabla}^{e}_{f_\alpha}
 + \frac{1}{4}\sum_{\alpha,\beta=1}^p\omega \left(F,h^F\right)( T(f_\alpha, f_\beta))
 \widehat{c}(f_\alpha)\widehat{c}(f_\beta)
\\
+ \frac{1}{4} \sum_{a=1}^mc(e_a) \Big [\wi{\nabla}^{e}_{e_a},
\sum_{\alpha,\beta=1}^p \widehat{c}(T(f_\alpha,
f_\beta))\widehat{c}(f_\alpha) \widehat{c}(f_ \beta)\Big].
 \end{multline}

These formulas show that there are no second order fiberwise
differentiation in $\left(\widehat{D}^{\pi^*\mu\otimes F}\right)^2$
and $[D^{\pi^*\mu\otimes F}, \widehat{D}^{\pi^*\mu\otimes F}]$.
Therefore one can apply the standard Getzler rescaling to
$(\widehat{D}^{\pi^*\mu\otimes F}_{s,\var})^2$ and
$[D^{\pi^*\mu\otimes F}_{s,\var}, \widehat{D}^{\pi^*\mu\otimes
F}_{s,\var}]$ with no problem and all terms converge as $\epsilon
\rightarrow 0$.

On the other hand, in \cite[Proposition 2.2]{Z}, Zhang formulated a
Lichnerowicz type formula for $t (D^{\pi^*\mu\otimes
F}_{s,\var})^2-z\sqrt{t}D^{\pi^*\mu\otimes F}_{s,\var}$. The only
singular term (for Getzler's rescaling) as $\var \to 0$ appears in
\begin{align}\label{c15}
-t\var \sum_{\alpha} \Big(\wi{\nabla}_{f_\alpha}
+\frac{\sqrt{\var}}{2} \sum_{i,\beta}\langle S(f_\alpha)e_i,f_\beta
\rangle c(e_i)c(f_\beta) +\frac{zc(f_\alpha)}{2\sqrt{t\var}} \Big)
^2.
\end{align}
This singular term can be easily eliminated by the exponential
transform, namely conjugating by the exponential \begin{equation}
\label{exp} e^{\frac{z\sum_{\alpha=1}^p y_{\alpha}
c(f_{\alpha})}{2\sqrt{t\epsilon}}}. \end{equation} We then do the
Getzler rescaling $G_{\sqrt{t\var}}$:
\[ y_\alpha \rightarrow
\sqrt{t\var}y_\alpha, \ \ \ \ \ \nabla_{f_\alpha}\rightarrow
\frac{1}{\sqrt{t\var}}\nabla_{f_\alpha}, \ \ \ \ \
c(f_\alpha)\rightarrow \frac{1}{\sqrt{t\var}}f^\alpha \wedge
-\sqrt{t\var}i_{f_\alpha}. \] By  (\ref{a4}), (\ref{0a4}),
(\ref{a23}), (\ref{c15}), \cite[Proposition 2.2]{Z}, and by
proceeding similarly as in \cite[(4.69)]{BC}, after the conjugation
by (\ref{exp}), the $G_{\sqrt{t\var}}$ rescaled operator of
$t(D^{\pi^*\mu\otimes F}_{s,\var}(r))^2 -z \sqrt{t}
D^{\pi^*\mu\otimes F}_{s,\var}(r)$ converges as
 $\var \to 0$ to
\begin{align}
 {\cal H} + (1+r^2) (C_{4t}^\mu)^2
-z \left(\sqrt{t}D^Z+  \frac{c(T)}{4\sqrt{t}}   +   \sqrt{-1}r
\left(\sqrt{t}(d^{Z*} -d^Z) + \frac{\widehat{c}(T)}{4\sqrt{t}}
\right)\right) \nonumber \\
={\cal H} + (1+r^2) (C_{4t}^2+R^\mu) -z 2t \frac{\partial}{\partial
t}(C_{4t} + \sqrt{-1}r D_{4t}), \label{c19}
\end{align} where
\[ {\cal H}=-\sum_{\alpha} \left( \nabla_{f_\alpha}
+\frac{1}{4}\left\langle R^{TB}_{b_0}y, f_\alpha\right\rangle
\right) ^2 - \frac{1}{4} \sum_{\alpha,\beta}\left\langle
R^{TB}_{b_0} f_{\alpha}, f_{\beta}\right\rangle
\widehat{c}(f_{\alpha})\widehat{c}(f_{\beta}) . \]

Finally, by the previous lemma, we see that the rescaled operator
obtained from the conjugation by (\ref{exp}) of $\widehat{
D}^{\pi^*\mu\otimes F}_\varepsilon$ converges to $D_{4t}$ as $\var
\to 0$. Proceeding as in \cite{MZ} and noting $2t
\frac{\partial}{\partial t}= \frac{\partial}{\partial \sqrt{t}}$, we
obtain the desired formula.
\end{proof}

\begin{prop}\label{t3.5} We have the following uniform asymptotic
expansion
\begin{multline}\label{3.45}
 {\rm Tr}_s\left[{ \widehat{
D}^{\pi^*\mu\otimes F}_\varepsilon }D^{\pi^*\mu\otimes
F}_\varepsilon(r)e^{-t \left(D^{\pi^*\mu\otimes
F}_\varepsilon(r)\right)^2}\right]
  =  c_{-k}(\var)\, t^{-k} + c_{-k + 1}(\var)\, t^{-k+1} + \cdots  , \end{multline}
  where $k=\frac{3}{2}$ if $n$ (dimension of the fiber) is odd and $k=1$ if $n$ is even.
Similarly,
\begin{multline}\label{3.46}
 - \int_B L\left(TB, \nabla^{TB}\right)\ch\left(\mu,
  \nabla^{\mu}\right) \varphi\,{\rm Tr}_s\left[t^{-\frac{1}{2}}D_{4t} \frac{\partial}{\partial \sqrt{t}}
 \left( C_{4t} + \sqrt{-1} r D_{4t} \right)   e^{- (1+r^2)C_{4t}^2} \right] \\
=  c_{-k}\, t^{-k} +  c_{-k+1}\, t^{-k+1} + \cdots .\end{multline}
Moreover,
\[ c_{i/2}(\var) \longrightarrow
c_{i/2} \ \ \  {\rm as} \ \var \rightarrow 0. \]
\end{prop}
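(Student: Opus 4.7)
My plan is to reduce the question to a local index computation that is uniform in $\varepsilon$, following the scheme of Bismut--Cheeger \cite{BC}. Introducing the auxiliary Grassmann variable $z$ as in the proof of Proposition \ref{t3.4}, I would first rewrite
\[
{\rm Tr}_s\!\left[\widehat{D}^{\pi^*\mu\otimes F}_\varepsilon D^{\pi^*\mu\otimes F}_\varepsilon(r)\, e^{-t(D^{\pi^*\mu\otimes F}_\varepsilon(r))^2}\right] = -t^{-1/2}\,{\rm Tr}_{s,z}\!\left[\widehat{D}^{\pi^*\mu\otimes F}_\varepsilon\, e^{-tD^{\pi^*\mu\otimes F}_\varepsilon(r)^2 + z\sqrt{t}\,D^{\pi^*\mu\otimes F}_\varepsilon(r)}\right].
\]
Since the exponent is, up to an order-zero $z$-correction, a generalized Laplacian on $M$, standard elliptic theory gives an asymptotic expansion in powers of $t^{1/2}$ as $t\to 0$ for each fixed $\varepsilon>0$. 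The sharp leading exponent $-k$ is dictated by the pointwise Getzler-type cancellations produced by supertracing against $\tau=(-1)^{N_Z}\tau(TB)$: the horizontal chirality $\tau(TB)$ kills the top $p/2$ negative powers coming out of the full heat expansion on $M$, and the vertical McKean--Singer cancellation from $(-1)^{N_Z}$ eliminates further terms, with an additional half-power gain when $n=\dim Z$ is odd, producing $k=3/2$ versus $k=1$.

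To make the expansion uniform in $\varepsilon$, I would perform the exponential conjugation by $\exp\!\bigl(z\sum_\alpha y_\alpha c(f_\alpha)/(2\sqrt{t\varepsilon})\bigr)$ and the Getzler rescaling $G_{\sqrt{t\varepsilon}}$ in normal horizontal coordinates, exactly as carried out in Proposition \ref{t3.4}. That proof already shows that the rescaled operator converges as $\varepsilon\to 0$ to the harmonic-oscillator-type operator
\[
{\cal H} + (1+r^2)C_{4t}^2 - 2tz\,\partial_t\bigl(C_{4t}+\sqrt{-1}\,r\,D_{4t}\bigr),
\]
and that $\widehat{D}^{\pi^*\mu\otimes F}_\varepsilon$ rescales to $D_{4t}$. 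The content of Proposition \ref{t3.5} is that this convergence occurs already at the level of the coefficients of the $t$-expansion. To see this I would use the Volterra (Duhamel) series for the heat kernel of the rescaled operator, splitting off the limiting $\varepsilon=0$ generator and treating the $\varepsilon$-corrections as a perturbation that contributes only higher-order powers of $t$; the convergence $c_{i/2}(\varepsilon)\to c_{i/2}$ is then read off term by term. The expansion (\ref{3.46}) for the right-hand side of (\ref{3.44}) follows from the explicit Mehler-formula computation for ${\cal H}$, which reproduces $L(TB,\nabla^{TB})$ and, after $\varphi$-normalization, $\ch(\mu,\nabla^\mu)$, leaving a fiberwise supertrace whose own small-$t$ expansion inherits the same leading power $t^{-k}$.

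The main obstacle is establishing the $\varepsilon$-uniform pointwise heat-kernel bounds required to justify the coefficient convergence. One must control the Volterra parametrix on a metric that is degenerating adiabatically, $g^{TM}_\varepsilon=g^{TZ}\oplus\varepsilon^{-1}\pi^*g^{TB}$, and in the presence of the non-self-adjoint $r$-deformation $-\sqrt{-1}\,r(d^Z-d^{Z*})$. Here the fiberwise invertibility relation (\ref{p2}) is essential: it forces the fiberwise leading symbol $(1+r^2)(d^Z+d^{Z*})^2$ to have spectrum bounded away from zero, which is precisely what drives the Bismut--Cheeger off-diagonal heat-kernel estimates \cite{BC} through and secures the uniformity needed for both (\ref{3.45}) and (\ref{3.46}).
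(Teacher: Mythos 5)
Your overall scheme (Grassmann-variable rewriting, standard elliptic theory for the existence of an expansion, adiabatic Getzler rescaling for the $\varepsilon$-uniformity) matches the paper's. But there is a genuine gap at the crux of (\ref{3.45}): the determination of the sharp leading exponent $k$. You assert that "pointwise Getzler-type cancellations" from supertracing against $\tau=(-1)^{N_Z}\tau(TB)$ kill the top $p/2$ negative powers and that a "vertical McKean--Singer cancellation" accounts for the rest, with a half-power "gain" when $n$ is odd giving $k=3/2$. This is a heuristic, not an argument, and as stated it points the wrong way: $t^{-3/2}$ is \emph{more} singular than $t^{-1}$, so the extra cancellation occurs in the even-$n$ case, not the odd one. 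The paper proves the vanishing of the low-order coefficients by a concrete computation at \emph{fixed} $\varepsilon$: it weights the supertrace by $t^{3/2}$, performs the full Getzler rescaling $G^M_{\sqrt t}$ in normal coordinates on all of $M$ (after the exponential conjugation removing the $z$-singular term), and shows that $t\,\widehat D^{\pi^*\mu\otimes F}_\varepsilon$ rescales to $-\tfrac z2\sum_{a>p}\widehat c(e_a)\,e_a\wedge$, so that $\lim_{t\to0}t^{3/2}\,{\rm Tr}_s[\cdots]$ is a finite Berezin integral; this pins down $k=3/2$, and the Berezin integral vanishes for parity reasons when $n$ is even, giving $k=1$. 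Without some such explicit local computation your claim about where the expansion starts is unsupported, and the value of $k$ is exactly what is needed later to define $\delta_\varepsilon(F,r)(s)$ and identify the finite part.

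A secondary point: you attribute the $\varepsilon$-uniformity of the small-$t$ expansion to the fiberwise spectral gap coming from (\ref{p2}). That gap governs the large-$t$ (exponential decay) regime; the small-$t$ coefficients are purely local and their convergence as $\varepsilon\to 0$ follows, as the paper argues, from the convergence of the local symbols of the $G_{\sqrt\varepsilon}$-rescaled operators (your Duhamel-series idea is a reasonable way to implement this), with no acyclicity needed. Indeed the paper notes that the definition of $\delta_\varepsilon(F)(r)$ does not use the acyclicity assumption at all.
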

\begin{proof}
Using two auxiliary Grassmann variables $z_1, z_2$, we write
\begin{align}\label{3.45} {\rm Tr}_s \left[ \widehat{
D}^{\pi^*\mu\otimes F}_\varepsilon D^{\pi^*\mu\otimes
F}_\varepsilon(r)e^{-t \left(D^{\pi^*\mu\otimes
F}_\varepsilon(r)\right)^2}\right] = \hspace{2in} \\
- t^{-2}{\rm Tr}_{s, z_1, z_2}\left[ e^{-t \left([D^{\pi^*\mu\otimes
F}_\varepsilon(r)]^2 -z_1 D^{\pi^*\mu\otimes F}_\varepsilon(r)- z_2
\widehat{ D}^{\pi^*\mu\otimes F}_\varepsilon \right)} \right]
\nonumber .
\end{align}
Applying the standard elliptic theory to the right hand side of
(\ref{3.45}), we derive an asymptotic expansion \begin{multline}
\hspace{.1in} {\rm Tr}_s \left[ \widehat{ D}^{\pi^*\mu\otimes
F}_\varepsilon D^{\pi^*\mu\otimes F}_\varepsilon(r)e^{-t
\left(D^{\pi^*\mu\otimes F}_\varepsilon(r)\right)^2}\right] =
c_{-m/2 -2}(\var)\, t^{-m/2-2}
 \\ \hspace{0in} + c_{-m/2 -1}(\var)\, t^{-m/2-1} + \cdots.
\nonumber
\end{multline}

To prove the vanishing of the coefficients, we revert to one
auxiliary Grassmann variable $z$ and rewrite
\begin{align}\label{extra} t^{\frac{3}{2}}\ {\rm Tr}_s \left[ \widehat{
D}^{\pi^*\mu\otimes F}_\varepsilon D^{\pi^*\mu\otimes
F}_\varepsilon(r)e^{-t \left(D^{\pi^*\mu\otimes
F}_\varepsilon(r)\right)^2}\right] = \hspace{2in} \\
\hspace{1in}  -{\rm Tr}_{s, z}\left[ t\, \widehat{
D}^{\pi^*\mu\otimes F}_\varepsilon e^{-t \left(D^{\pi^*\mu\otimes
F}_\varepsilon(r)\right)^2+ z\sqrt{t} D^{\pi^*\mu\otimes
F}_\varepsilon(r)} \right] .\nonumber
\end{align}

 As usual, one fixes a point
of $M$ and employs the normal coordinates $x$ around the point.
Consider the Getzler rescaling $G_{\sqrt{t}}^M$:
\[  x_a \rightarrow
\sqrt{t}\, x_a, \ \ \ \ \ \nabla_{e_a}\rightarrow
\frac{1}{\sqrt{t}}\, \nabla_{e_a}, \ \ \ \ \ c(e_a)\rightarrow
\frac{1}{\sqrt{t}}\, e_a \wedge -\sqrt{t}\, i_{e_a}.
\]

By (\ref{b7}), (\ref{b8}), (\ref{b9}) and the same argument as in
\cite{BF}, we can formulate a Lichnerowicz formula for $t
\left(D^{\pi^*\mu\otimes F}_\varepsilon(r)\right)^2- z\sqrt{t}
D^{\pi^*\mu\otimes F}_\varepsilon(r)$. The only singular term with
respect to the Getzler rescaling $G_{\sqrt{t}}^M$ as $t \to 0$
appears in
\[ -t\var \sum_{a=1}^p \Big(\nabla_{e_a} +\frac{zc(e_a)}{2\sqrt{t\var}} \Big)^2
-t \sum_{a=p+1}^m \Big(\nabla_{e_a} +\frac{zc(e_a)}{2\sqrt{t}}
\Big)^2.
\]
This singular term can be easily eliminated by the exponential
transform, namely conjugating by the exponential
\[
\label{exp} e^{\frac{z\sum_{a=1}^m x_{a} c(e_a)}{2\sqrt{t}}}.
\]
Thus, after the exponential transform and then the Getzler rescaling
$G_{\sqrt{t}}^M$, we find that $t \left(D^{\pi^*\mu\otimes
F}_\varepsilon(r)\right)^2- z\sqrt{t} D^{\pi^*\mu\otimes
F}_\varepsilon(r)$ converges as $t\to 0$ to
\[  \mathcal H(r, {\var}) -r^2 \omega^2 + z \sqrt{-1} r \sum_{a=p+1}^m \hat{c}(e_a) \partial_a,        \]
where
\[  \mathcal H(r, {\var})=- \var \sum_{a=1}^p \left( \partial_a
+\frac{1}{4}\left\langle R^{TM}_{p_0}y, e_a\right\rangle \right) ^2
-(1+r^2) \sum_{a=p+1}^m \left( \partial_a +\frac{1}{4}\left\langle
R^{TM}_{p_0}y, e_a\right\rangle \right) ^2 . \]

On the other hand, after the exponential transform and then the
Getzler rescaling $G_{\sqrt{t}}^M$, $t\, \widehat{
D}^{\pi^*\mu\otimes F}_\varepsilon$ converges to
\[ -\frac{z}{2} \sum_{a=p+1}^m \hat{c}(e_a)\ e_a \wedge. \]
It follows that
\[  \lim_{t\rightarrow 0} t^{\frac{3}{2}} {\rm Tr}_s \left[ \widehat{ D}^{\pi^*\mu\otimes F}_\varepsilon
D^{\pi^*\mu\otimes F}_\varepsilon(r)e^{-t \left(D^{\pi^*\mu\otimes
F}_\varepsilon(r)\right)^2}\right]= -\int^B \sum_{a=p+1}^m
\hat{c}(e_a)\ e_a \wedge \ e^{-\mathcal H(\var, r) +r^2 \omega^2},
\] where $\int^B$ denotes the Berezin integral (cf. \cite[Page 604]{MZ}).

Thus, we deduce that
 $c_{i}(\var)=0$ for $-n/2 -2\leq i < -k$, with $k=\frac{3}{2}$ if $n$ is odd.
On the other hand, if $n$ is even, the Berezin integral on the right
hand side vanishes for parity reason, and thus $k=1$.

Now we show that the asymptotic expansion is uniform in $\var$.
According to the discussion above, after the conjugation by
(\ref{exp}),  the $G_{\sqrt{\var}}$ rescaled operator of
$(D^{\pi^*\mu\otimes F}_{s,\var}(r))^2 -z D^{\pi^*\mu\otimes
F}_{s,\var}(r)$ converges as
 $\var \to 0$ to
\[
 {\cal H} + (1+r^2) (C_{4t}^2+R^\mu)
-z \left(D^Z+  \frac{c(T)}{4}   +   \sqrt{-1}r \left((d^{Z*} -d^Z) +
\frac{\widehat{c}(T)}{4} \right)\right) \nonumber  .
\]
Similarly, the $G_{\sqrt{\var}}$ rescaled operator of $\widehat{
D}^{\pi^*\mu\otimes F}_\varepsilon$ converges to $D_4$.  Since the
asymptotic expansion of
\[ {\rm Tr}_s \left[ \widehat{ D}^{\pi^*\mu\otimes F}_\varepsilon
D^{\pi^*\mu\otimes F}_\varepsilon(r)e^{-t \left(D^{\pi^*\mu\otimes
F}_\varepsilon(r)\right)^2}\right] \] depends only on the local
symbols of the rescaled operators of $(D^{\pi^*\mu\otimes
F}_{s,\var}(r))^2 -z D^{\pi^*\mu\otimes F}_{s,\var}(r)$ and
$\widehat{ D}^{\pi^*\mu\otimes F}_\varepsilon$, the coefficients
$c_i(\var)$ of its asymptotic expansion converges uniformly to that
of
\[ {\rm Tr}_s \left[  D_4 e^{-t \left( {\cal H} + (1+r^2) (C_{4t}^2+R^\mu)
-z (D^Z+  \frac{c(T)}{4}   +   \sqrt{-1}r (d^{Z*} -d^Z +
\frac{\widehat{c}(T)}{4} ) ) \right)} \right] . \] On the other
hand, since
\begin{multline}
{\rm Tr}_s \left[ D_{4t} e^{-\left({\cal H} + (1+r^2) (C_{4t}^\mu)^2
-z \left(\sqrt{t}D^Z+ \frac{c(T)}{4\sqrt{t}}   +   \sqrt{-1}r
\left(\sqrt{t}(d^{Z*} -d^Z) + \frac{\widehat{c}(T)}{4\sqrt{t}}
\right)\right)\right) } \right] =  \nonumber \\
t^{-1/2} {\rm Tr}_s \left[ D_4e^{-t \left( {\cal H} + (1+r^2)
(C_{4t}^2+R^\mu) -z (D^Z+ \frac{c(T)}{4}   +   \sqrt{-1}r (d^{Z*}
-d^Z + \frac{\widehat{c}(T)}{4} ) ) \right)} \right] , \nonumber
\end{multline}
we obtain (\ref{3.46}) and also the convergence of asymptotic
coefficients.
\end{proof}

\begin{cor} The function $\delta_\varepsilon(F, r)( s)$ in
(\ref{ps3}) has a meromorphic continuation to the whole complex
plane with $s=0$ a regular point.
\end{cor}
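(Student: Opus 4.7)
The strategy is the standard Mellin-transform / zeta-regularization argument, built on Proposition \ref{t3.5} (small-$t$ asymptotic expansion) together with exponential decay of the integrand as $t \to +\infty$. I would split
\[
\int_0^{+\infty} t^s\,{\rm Tr}_s\left[\widehat{D}^{\pi^*\mu\otimes F}_\varepsilon D^{\pi^*\mu\otimes F}_\varepsilon(r) e^{-t(D^{\pi^*\mu\otimes F}_\varepsilon(r))^2}\right] dt \;=\; \int_0^1 + \int_1^{+\infty}
\]
and handle the two pieces separately.

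For the large-$t$ piece, I would invoke the discussion preceding (\ref{ps3}): under the technical assumption $H^*(Z_b, F|_{Z_b}) = 0$, and for $\varepsilon > 0$ small enough, the spectrum of $D^{\pi^*\mu\otimes F}_\varepsilon(r)$ is uniformly bounded away from zero (this uses (\ref{p2}) together with the argument of \cite{BC}). Standard heat-kernel estimates then yield exponential decay of the supertrace in $t$, so $\int_1^{+\infty} t^s(\cdots)\,dt$ is entire in $s$.

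For the small-$t$ piece I would insert the asymptotic expansion of Proposition \ref{t3.5}: for any integer $N$ the integrand can be written as $\sum_{j=-k}^{N} c_j(\varepsilon)\,t^j + R_N(t)$ with $R_N(t) = O(t^{N+1})$. Integrating monomial by monomial produces $\sum_{j=-k}^N c_j(\varepsilon)/(s+j+1)$, while the remainder contributes a function holomorphic in $\Re s > -N-2$. Letting $N \to \infty$ supplies a meromorphic continuation of the full Mellin integral to all of $\mathbb{C}$, with at most simple poles at $s \in \{k-1, k-2, \ldots\}$.

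Finally I would absorb the $1/\Gamma(s)$ prefactor, whose simple zeros at $s = 0, -1, -2, \ldots$ are designed to kill precisely any poles of the Mellin integral located at non-positive integers. In the even fiber dimension case ($k = 1$) every such pole is cancelled and $\delta_\varepsilon(F, r)(s)$ is in fact entire; in the odd fiber dimension case ($k = 3/2$) the poles sit at half-integers, so $s = 0$ is already a regular point of the Mellin integral, and $1/\Gamma(0) = 0$ forces regularity (indeed vanishing) at $s = 0$. Either way $s=0$ is a regular point, which is the assertion of the corollary. No step here presents a genuine obstacle—the result is routine bookkeeping once Proposition \ref{t3.5} and the large-$t$ exponential decay are in hand; the only wrinkle to track is the parity dichotomy in $k$, which ensures that the poles of the Mellin integral and the zeros of $1/\Gamma$ line up correctly at $s=0$.
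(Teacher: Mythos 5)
Your proposal is correct and follows essentially the same route as the paper's own proof: exponential decay of the supertrace for large $t$, the small-$t$ asymptotic expansion of Proposition \ref{t3.5} yielding a meromorphic continuation with simple poles at $s=k-1,\,k-2,\dots$, and cancellation of the possible pole at $s=0$ by the zero of $1/\Gamma(s)$. Your parity discussion at $s=0$ matches the paper's explicit evaluation in (\ref{ps4}).
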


\begin{proof} The integral in
(\ref{ps3}) is convergent at $t=\infty$ since
\[ {\rm Tr}_s\left[{ \widehat{
D}^{\pi^*\mu\otimes F}_\varepsilon }D^{\pi^*\mu\otimes
F}_\varepsilon(r)e^{-t \left(D^{\pi^*\mu\otimes
F}_\varepsilon(r)\right)^2}\right]
\]
is exponentially decaying in $t$ as $t \rightarrow \infty$. On the
other hand, it follows immediately from Proposition \ref{t3.5} that
the integral is convergent at $t=0$ for $\Re \, s > k-1$. Moreover
the standard method shows that $\delta_\varepsilon(F, r)( s)$ in
(\ref{ps3}) has a meromorphic continuation to the whole complex
plane with simple poles at $s=k-1, k, \ldots$. However the possible
simple pole at $s=0$ is canceled by that of $\Gamma(s)$. Hence $s=0$
is a regular point. From this discussion, we also derive the
following formula
\begin{align} \label{ps4}
\delta_\varepsilon(F)(r) = \delta_\varepsilon(F, r)'( 0) \hspace{2.5in}\\
= -\frac{1}{2}  \int_0^{1} \left( {\rm Tr}_s\left[{ \widehat{
D}^{\pi^*\mu\otimes F}_\varepsilon
 }D^{\pi^*\mu\otimes
F}_\varepsilon(r)e^{-t \left(D^{\pi^*\mu\otimes
F}_\varepsilon(r)\right)^2}\right] -c_{-k}(\var) \, t^{-k} \right)
dt  \nonumber \\
-\frac{1}{2} \int_1^{+\infty} {\rm Tr}_s\left[{ \widehat{
D}^{\pi^*\mu\otimes F}_\varepsilon
 }D^{\pi^*\mu\otimes
F}_\varepsilon(r)e^{-t \left(D^{\pi^*\mu\otimes
F}_\varepsilon(r)\right)^2}\right]dt + C,\hspace{.6in} \nonumber
\end{align}
where $C=c_{\frac{3}{2}}(\var)$ if $m$ is odd (and hence
$k=\frac{3}{2}$) and $C=\frac{1}{2}\Gamma'(1) c_{-1}(\var)$ if $m$
is even (and thus $k=-1$).
\end{proof}

We now define our torsion form. As in the discussion above, we first
define the corresponding zeta function
\begin{align}\label{3.366}  \zeta_{\cal T}(s)=
 -\frac{1}{\Gamma(s)} \int_{0}^{\infty} t^{s-1} \varphi\,
 {\rm Tr}_s\left[N_ZD_t^2e^{\left(1+r^2\right)D_t^2}\right] dt.  \end{align}
From \cite[Theorem 3.21]{BL}, ${\rm Tr}_s
\left[N_Z(1+2D_t^2)e^{\left(1+r^2\right)D_t^2}\right]$ has an
asymptotic expansion as $t\to 0$ with no singular terms (i.e. no
singular powers of $t$). By \cite{BZ} and \cite{dm}, ${\rm Tr}_s
\left[N_Ze^{\left(1+r^2\right)D_t^2}\right]$ has an asymptotic
expansion as $t\to 0$ starting with the $t^{-l}$ term, with $l=0$ if
$n$ is even and $l={1\over 2}$ if $n$ is odd, compare
\cite[(3.118)]{MZ}. Hence ${\rm
Tr}_s\left[N_ZD_t^2e^{\left(1+r^2\right)D_t^2}\right]$ has an
asymptotic expansion as $t\to 0$ starting with the $t^{-l}$ term:
\[ {\rm Tr}_s\left[N_ZD_t^2e^{\left(1+r^2\right)D_t^2}\right] \sim A_{-l} \, t^{-l}
+ A_{-l+1}\, t^{-l+1} + \cdots . \] It follows that $\zeta_{\cal
T}(s)$ has a meromorphic continuation to the whole complex plane
with $s=0$ a regular point. Also, for later use, we note that
\begin{align} \label{nae}
\zeta_{\cal T}(0)=0, \end{align} when $n$ is odd; and \begin{align}
\label{naet} \{ \zeta_{\cal T}(0) \}^{[i]}=0, \end{align} for $i>0$
when $n$ is even.
 Now we define our torsion form by
\begin{align}\label{3.36} {\cal T}_r = \zeta_{\cal T}'(0).
\end{align}
In fact, we have \begin{align} \label{dmz} {\cal T}_r= -
\int_{0}^{1} \varphi\, \left(
 {\rm Tr}_s\left[N_ZD_t^2e^{\left(1+r^2\right)D_t^2}\right] - A_{-l}\, t^{-l} \right) {dt\over
 t} \\
 - \int_{1}^{\infty} \varphi\,
 {\rm Tr}_s\left[N_ZD_t^2e^{\left(1+r^2\right)D_t^2}\right] {dt\over
 t}  + C', \hspace{1.1in} \nonumber
 \end{align}
 where $C'=\varphi\, ( A_{-l} \Gamma'(1))$ if $n$ is even, and $C'=\varphi\, (2 A_{-l})$ if $n$ is odd.

We also introduce a variant of the torsion form. From (\ref{3.46}),
we have for $t\to 0$ \begin{align} \label{dmz2} {\rm
Tr}_s\left[t^{-\frac{1}{2}}D_{4t} \frac{\partial}{\partial \sqrt{t}}
 \left( C_{4t} + \sqrt{-1} r D_{4t} \right)   e^{(1+r^2)D_{4t}^2}
 \right] \sim C_{-k} t^{-k} + C_{-k+1}t^{-k+1} + \cdots .
 \end{align}
Here $k$ is defined as in (\ref{3.46}). Define
\begin{align} \label{dmz3} \zeta_{\widetilde{\mathcal T}}(s)=
-\frac{1}{\Gamma(s)} \int_{0}^{\infty} t^{s} \varphi\,{\rm
Tr}_s\left[t^{-\frac{1}{2}}D_{4t} \frac{\partial}{\partial \sqrt{t}}
 \left( C_{4t} + \sqrt{-1} r D_{4t} \right)   e^{(1+r^2)D_{4t}^2}
 \right] dt. \end{align}
As before, this zeta function has analytic continuation to the whole
complex plane with $s=0$ a regular point. Therefore we can define
\begin{align} \label{dmz4}
\widetilde{\mathcal T}_r=\zeta_{\widetilde{\mathcal T}}'(0).
\end{align}
In fact, one has \begin{align} \label{dmz5} \widetilde{\mathcal
T}_r= - \int_{0}^{1} \varphi\, \left({\rm
Tr}_s\left[t^{-\frac{1}{2}}D_{4t} \frac{\partial}{\partial \sqrt{t}}
 \left( C_{4t} + \sqrt{-1} r D_{4t} \right)   e^{(1+r^2)D_{4t}^2}
 \right]
  - C_{-k}\, t^{-k} \right) dt \\
 - \int_{1}^{\infty} \varphi\,
 {\rm Tr}_s\left[t^{-\frac{1}{2}}D_{4t}
\frac{\partial}{\partial \sqrt{t}}
 \left( C_{4t} + \sqrt{-1} r D_{4t} \right)   e^{(1+r^2)D_{4t}^2}
 \right] dt + C'',
 \hspace{.5in} \nonumber \end{align}
where $C''=2\varphi C_{-\frac{3}{2}}$ if $m$ is odd (and hence
$k=\frac{3}{2}$) and $C''=\Gamma'(1) \varphi C_{-1}$ if $m$ is even
(and thus $k=1$).

We are now ready for our main result.

\begin{thm}\label{mt} Under the assumption that the flat vector bundle $F$ over $M$ is fiberwise acyclic,
  the following identity holds,
\begin{align}\label{3.35}
\lim_{\var \rightarrow 0} \delta_\var(F)(r)= \int_B L\left(TB,
\nabla^{TB}\right)
 \ch\left(\mu, \nabla^{\mu}\right) {\cal T}_r.
\end{align}
\end{thm}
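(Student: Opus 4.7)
The plan is to combine the pointwise limit (Proposition \ref{t3.4}), the uniform small-$t$ asymptotic expansion (Proposition \ref{t3.5}), and the regularized splitting (\ref{ps4}) to pass to the adiabatic limit term-by-term, and then to reconcile the answer with the representation (\ref{dmz}) of $\mathcal{T}_r$. Concretely, I would split $\delta_\var(F)(r)$ into three pieces as in (\ref{ps4}): the integral over $[0,1]$ of the subtracted integrand $\mathrm{Tr}_s[\widehat{D}^{\pi^*\mu\otimes F}_\var D^{\pi^*\mu\otimes F}_\var(r) e^{-t (D^{\pi^*\mu\otimes F}_\var(r))^2}] - c_{-k}(\var) t^{-k}$, the integral over $[1,\infty)$ of the unsubtracted trace, and the constant $C(\var)$. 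I would then take $\var \to 0$ in each piece and add up.

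For the small-$t$ piece, the uniformity in $\var$ of the asymptotic expansion in Proposition \ref{t3.5} furnishes a uniform-in-$\var$ bound of the subtracted integrand on $(0,1]$, while Proposition \ref{t3.4} provides the pointwise limit; dominated convergence then gives convergence to the corresponding piece of the representation (\ref{dmz5}) of $\widetilde{\mathcal T}_r$. For the large-$t$ piece, I need to establish that the spectrum of $D^{\pi^*\mu\otimes F}_\var(r)$ is bounded away from zero uniformly in $\var \in (0,\var_0]$. For each fixed small $\var$ this lower bound follows from (\ref{p2}) by the argument recorded right after that formula; the uniform version follows from a Bismut--Cheeger style adiabatic estimate comparing $D^{\pi^*\mu\otimes F}_\var(r)$ with its leading vertical symbol $D^Z+\sqrt{-1}r(d^{Z*}-d^Z)$, which is invertible under the fiberwise acyclicity hypothesis. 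Once this gap is in hand, the trace decays exponentially in $t$ uniformly in $\var$, and Proposition \ref{t3.4} combined with dominated convergence delivers the limit. The constant terms converge by the final assertion of Proposition \ref{t3.5}. Assembling the three pieces gives
\begin{align*}
\lim_{\var \to 0}\delta_\var(F)(r)=\int_B L(TB,\nabla^{TB})\,\mathrm{ch}(\mu,\nabla^{\mu})\,\widetilde{\mathcal T}_r .
\end{align*}

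The remaining step is to replace $\widetilde{\mathcal T}_r$ by $\mathcal T_r$. For this I would use the infinite-dimensional analogues of the commutator identities $v+v^* = -2t^{-1/2}[N_Z, D_t]$ and $v^*-v = -2t^{-1/2}[N_Z, C_t]$ valid in the Bismut--Lott setting, together with the commutation $[C_t,D_t]=0$ from (\ref{a4}). A direct calculation in the spirit of (\ref{2.31}) rewrites the integrand in (\ref{dmz5}) as $-t^{-1}\,\varphi\,\mathrm{Tr}_s[N_Z D_{4t}^2 e^{(1+r^2)D_{4t}^2}]$ up to an exact form on $B$; since $L(TB,\nabla^{TB})\,\mathrm{ch}(\mu,\nabla^{\mu})$ is closed, the exact part drops out after integration over $B$. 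The rescaling $t\mapsto t/4$ then matches this against (\ref{3.36}), with a parallel matching of subtracted singular terms and zeta-regularized constants thanks to the uniform asymptotic expansion of Proposition \ref{t3.5} and the expansion (\ref{dmz2}).

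The main obstacle I expect is the uniform spectral gap on $[1,\infty)$. For each fixed $\var>0$ the argument around (\ref{p2}) suffices, but the adiabatic limit $\var\to 0$ requires a careful estimate controlling the full deformed operator, including the rescaled horizontal Clifford contributions and the Bismut--Lott superconnection lower-order terms, by the invertible vertical operator $D^Z+\sqrt{-1}r(d^{Z*}-d^Z)$ as the horizontal metric is blown up; this is the adiabatic spectral analysis of \cite{BC,D} adapted to the present $r$-dependent non-self-adjoint family. A secondary point of care is to verify that the ``exact on $B$'' terms appearing in the passage from $\widetilde{\mathcal T}_r$ to $\mathcal T_r$ indeed vanish against the closed characteristic form factor after integration over $B$.
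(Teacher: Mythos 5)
Your proposal follows essentially the same two-step route as the paper: first combine Proposition \ref{t3.4}, the uniform expansion of Proposition \ref{t3.5}, the splitting (\ref{ps4}) and the representation (\ref{dmz5}) to identify the adiabatic limit with a multiple of $\int_B L\,\ch\,\widetilde{\mathcal T}_r$; then use the commutator identities $2u\frac{\partial}{\partial u}C_u=-[N_Z,D_u]$, $2u\frac{\partial}{\partial u}D_u=-[N_Z,C_u]$ to rewrite the integrand of $\widetilde{\mathcal T}_r$ as a multiple of $t^{-1}{\rm Tr}_s[N_ZD_t^2e^{(1+r^2)D_t^2}]$ plus a form that is exact on $B$ and hence killed by the closed factor $L(TB,\nabla^{TB})\ch(\mu,\nabla^\mu)$, with the singular coefficients and regularization constants matched termwise. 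The uniform spectral gap you single out as the main obstacle is exactly what the paper asserts in \S\ref{s3.3} (invertibility of $D^{\pi^*\mu\otimes F}_\varepsilon(r)$ with eigenvalues uniformly bounded away from zero, via the Bismut--Cheeger argument and (\ref{p2}) under fiberwise acyclicity), so your treatment of the large-$t$ piece is the intended one. The only caveat is constant bookkeeping: since the factor $\frac{1}{2}$ is built into the definition (\ref{ps3}), the first step yields $\frac{1}{2}\int_B L\,\ch\,\widetilde{\mathcal T}_r$ rather than $\int_B L\,\ch\,\widetilde{\mathcal T}_r$, and correspondingly the rewriting of the integrand of (\ref{dmz5}) carries a factor $-\frac{2}{t}$ in front of ${\rm Tr}_s[N_ZD_t^2e^{(1+r^2)D_t^2}]$ (cf.\ (\ref{dmz6}) and the supertrace identity ${\rm Tr}_s[D_tN_ZD_te^X]=-{\rm Tr}_s[N_ZD_t^2e^X]$), so that $\int_B L\,\ch\,\widetilde{\mathcal T}_r=2\int_B L\,\ch\,\mathcal T_r$; your two intermediate normalizations each differ from these by a factor of $2$ in a way that cancels, so the final identity (\ref{3.35}) still comes out correctly, but the intermediate statements as you wrote them do not follow from the definitions (\ref{ps3}), (\ref{dmz3}).
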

\begin{proof} The proof follows the same line as in the proof of Theorem \ref{t2.4}. Using Proposition \ref{t3.4},
Proposition \ref{t3.5}, (\ref{ps4}) and (\ref{dmz5}), one deduce
that \[  \lim_{\var \rightarrow 0} \delta_\var(F)(r)= {1\over 2}
\int_B L\left(TB, \nabla^{TB}\right)
 \ch\left(\mu, \nabla^{\mu}\right) {\widetilde{\cal T}}_r.\]

To derive the final result, we use
\[ 2u \frac{\partial}{\partial u} C_u=-[N_Z, D_u], \ \ \ \ \ \  2u \frac{\partial}{\partial u} D_u=-[N_Z, C_u] \]
(see \cite[(3.81)]{MZ}) to rearrange the right hand side of
(\ref{3.44}) as in (\ref{2.31}). Namely, one has
\begin{align} \label{dmz6} - {\rm
Tr}_s\left[t^{-\frac{1}{2}}D_{4t} \frac{\partial}{\partial \sqrt{t}}
 \left( C_{4t} + \sqrt{-1} r D_{4t} \right)   e^{(1+r^2)D_{4t}^2}
 \right] = \hspace{1in} \\
 - \frac{2}{t}
 {\rm Tr}_s\left[N_ZD_t^2e^{\left(1+r^2\right)D_t^2}\right] +
 \frac{\sqrt{-1}r}{t} d\,
 {\rm Tr}_s\left[N_ZD_t\, e^{\left(1+r^2\right)D_t^2}\right].
 \nonumber
 \end{align}
It follows that $-2 A_{i} + \sqrt{-1} r \, d\, B_{i} = 0$ if $i <
-\frac{1}{2}$ and $-2 A_{i} + \sqrt{-1} r \, d\, B_{i} = C_{i-1}$ if
$i \geq -\frac{1}{2}$, where $B_i$ is the coefficient of asymptotic
expansion of ${\rm Tr}_s\left[N_ZD_t\,
e^{\left(1+r^2\right)D_t^2}\right]$. Consequently, we obtain by
using (\ref{dmz}) and (\ref{dmz5}),
\[ \int_B L\left(TB, \nabla^{TB}\right)
 \ch\left(\mu, \nabla^{\mu}\right) {\widetilde{\cal T}}_r = 2\int_B L\left(TB, \nabla^{TB}\right)
 \ch\left(\mu, \nabla^{\mu}\right) {\cal T}_r. \]

\end{proof}

\subsection{Comparison with the Bismut-Lott torsion form}\label{s3.5}

$\quad$ Recall that the Bismut-Lott torsion form $\mathcal{T}(T^HM,g^{TZ}, h^F)$
is defined by
\begin{multline}\label{0d000}
\mathcal{T}\left(T^HM,g^{TZ}, h^F\right)=-\varphi
\int_0^{+\infty} \left(\tr_s\left[N_Z\left(1+2D_u^2\right)e^{D_u^2}\right]\right.\\
- d(H(Z,F|_Z)) \left. - \left(
  \frac{n}{2} \chi (Z)\rk (F) - d(H(Z,F|_Z))\right)\left(1-{u\over 2}\right)e^{-u/4}\right){du\over  2u}.
\end{multline}

Now we note that the second and the third terms of the integrand,
terms inserted in (\ref{0d000}) to make the integral convergent,
are degree $0$ terms. Hence,  for $i>0$,
\[  \left\{ \mathcal{T}\left(T^HM,g^{TZ}, h^F\right) \right\}^{[i]} =
 -  \int_0^{+\infty} \left\{ \varphi \, \tr_s\left[N_Z\left(1+2D_u^2\right)e^{D_u^2}\right]\right\}^{[i]} {du\over  2u}    , \]
where we denote by a superscript $[i]$ the $i$-form component of the
corresponding form.

On the other hand, since
\[
\left\{  \tr_s\left[ N_Z D_u^2 \exp \left(D_u^2\right)\right]
\right\} ^{[i]} = u^{-i/2}\left\{  \tr_s\left[ N_Z u D_1^2 \exp
\left(u D_1^2\right)\right] \right\} ^{[i]}, \]
\[
\left\{  \tr_s\left[ N_Z \exp \left(D_u^2\right)\right]
\right\} ^{[i]} = u^{-i/2}\left\{  \tr_s\left[ N_Z \exp
\left(u D_1^2\right)\right] \right\} ^{[i]},
\]
 one deduces that, for $\Re\, s$ sufficiently large,
\begin{multline}
\int_0^{\infty} u^s \left\{  \tr_s\left[ N_Z D_u^2 \exp
\left(D_u^2\right)\right] \right\} ^{[i]} \frac{du}{u}
=\int_0^{\infty} u^{s-\frac{i}{2}} \left\{ \tr_s\left[ N_Z  D_1^2
\exp \left(u
D_1^2\right)\right] \right\} ^{[i]} du \nonumber \\
=\int_0^{\infty} u^{s-\frac{i}{2}}\frac{\partial}{\partial u}
\left\{\tr_s\left[N_Z \exp \left(u D_1^2\right)\right] \right\}^{[i]} du \nonumber \\
=\int_0^{\infty} (i-2s) \, u^{s-\frac{i}{2}} \left\{ \tr_s\left[ N_Z
\exp
\left(u D_1^2\right)\right] \right\} ^{[i]} \frac{du}{2u},  \hspace{-.3in} \nonumber \\
= \int_0^{\infty} (i-2s) u^s\,  \left\{ \tr_s\left[ N_Z \exp \left(
D_u^2\right)\right] \right\} ^{[i]} \frac{du}{2u}, \hspace{1in}
\nonumber
\end{multline}
Cf.  \cite[(3.140) and (3.141)]{MZ}. We have used our assumption
that $H^*(Z_b,F|_{Z_b})=\{0\}$.

Thus,  for $i>0$,
\begin{multline}
\frac{1}{\Gamma(s)} \int_0^{+\infty} u^{s} \left\{  \varphi \,
 \tr_s\left[N_Z\left(1+2D_u^2\right)e^{D_u^2}\right]\right\}^{[i]} {du\over  2u}
 = \\
 (\frac{1}{i-2s} +1)\, \frac{1}{\Gamma(s)} \int_0^{+\infty} u^{s-1} \left\{  \varphi \,
\tr_s\left[ N_Z D_u^2 \exp \left(D_u^2\right)\right] \right\} ^{[i]}
 du
\end{multline}
Hence, \[ \left\{ \mathcal{T}\left(T^HM,g^{TZ}, h^F\right)
\right\}^{[i]} = (1+r^2)^{1-\frac{i}{2}} \left\{ (2+\ln(1+r^2))
\zeta_{{\mathcal T}}(0) + \frac{i+1}{i} \mathcal{T}_r
\right\}^{[i]}, \] as
\[
\left\{ \zeta_{\mathcal{T}}(s) \right\}^{[i]} =- (1+r^2)^{-s +
\frac{i}{2} -1} \frac{1}{\Gamma(s)} \int_0^{\infty} u^{s-1}
 \left\{  \varphi \,
\tr_s\left[ N_Z D_u^2 \exp \left(D_u^2\right)\right] \right\}
^{[i]}\, du.
\]
In particular, using (\ref{nae}) and (\ref{naet}), we have
\begin{align} \label{dmz7} \left\{\mathcal{T}_r\right\}^{[i]}=
\frac{i}{i+1} (1+r^2)^{\frac{i}{2}-1} \left\{
\mathcal{T}\left(T^HM,g^{TZ}, h^F\right) \right\}^{[i]}
\end{align}

For the degree $0$ component, one has
\[  \left \{ {\mathcal T}_r \right\}^{[0]}= 0. \]
This is a direct consequence of \cite[Theorem 3.29]{BL}.
Thus, up to a scaling factor on each degree component,
$\mathcal{T}_r $ captures the positive degree components of the
Bismut-Lott real analytic torsion form.

\end{document}